\documentclass[12pt,leqno]{amsart}
\usepackage{amsmath, amsthm, amscd, amsfonts, tikz-cd, mathtools, amssymb, graphicx, xcolor}
\usepackage[english]{babel}
\usepackage[bookmarksnumbered, colorlinks, plainpages]{hyperref}
\usepackage{hyperref}
\hypersetup{
    colorlinks=true,
    linkcolor=black,
    citecolor=blue,
    urlcolor=cyan   
}

\newtheorem{theorem}{Theorem}[section]
\newtheorem{lemma}[theorem]{Lemma}
\newtheorem{proposition}[theorem]{Proposition}
\newtheorem{corollary}[theorem]{Corollary}
\theoremstyle{definition}
\newtheorem{definition}[theorem]{Definition}
\newtheorem{example}[theorem]{Example}

\theoremstyle{remark}
\newtheorem{remark}[theorem]{Remark}
\numberwithin{equation}{section}
\theoremstyle{plain}

\title{A Fubini Theorem for Grothendieck Functional Integrals}
\author{Haoran He}
\email{haoranhemaths@163.com}

\author{Qichen He}
\email{qichenhemaths@163.com}
\date{\today}

\begin{document}

\begin{abstract}
    This paper systematically studies the subset of continuous linear functionals on the projective tensor product of Banach spaces whose norms are bounded by Grothendieck's constant $K_G$. We term such functionals Grothendieck functional integrals. The integral is defined as a linear functional on the projective tensor product space that satisfies the boundedness condition $|\mu(x)| \leq K_G \|x\|_\pi$, where $K_G$ denotes Grothendieck's constant. We prove that such integrals admit a Hilbert space representation theorem and establish the corresponding abstract Fubini theorem to demonstrate that the order of integration may be interchanged. Furthermore, we extend this theory to the setting of multiple tensor products and provide integral representations in concrete function spaces. Our work offers a unified framework for bilinear and multilinear analysis, with a universal constant serving as the fundamental bound.
\end{abstract}

\maketitle

\textit{Keywords.} Grothendieck constant, projective tensor product, functional integral, Fubini theorem; bilinear forms, Hilbert space representation

\tableofcontents

\section{Introduction}

Functional integrals constitute central tools in analysis, having evolv\-ed from the classical Lebesgue integral to infinite-dimensional Wiener integrals and Feynman path integrals. Concurrently, deep inequalities in functional analysis provide crucial connections among different mathematical domains. In his seminal 1956 paper \cite{Grothendieck1956}, Grothendieck proved a fundamental inequality: any continuous bilinear form on Banach spaces can be factorized through inner products in Hilbert space, with a universal constant known as Grothendieck's constant $K_G$. This inequality has found applications in operator theory and quantum information theory, as well as in the geometry of Banach spaces.

Inspired by this, the present paper proposes a new type of functional integral, which we term the Grothendieck functional integral. The fundamental idea is to employ Grothendieck's constant as the natural scale for the boundedness of integral functionals, thereby defining a class of linear functionals with desirable structural properties acting on tensor products of arbitrary Banach spaces. Unlike classical functional integrals, this definition relies not on specific measure-theoretic structures but rather on the projective tensor norm and the universal constant $K_G$, thereby achieving greater abstraction and universality.
The principal contributions of this paper are the follows:

\begin{enumerate}
    \item[\textup{1.}] We formally define the Grothendieck functional integral and investigate its basic properties;
    \item[\textup{2.}] We prove a Hilbert space representation theorem for this integral, showing that any Grothendieck integral can be represented as an inner product in Hilbert space;
    \item[\textup{3.}] We establish an abstract Fubini theorem, proving that the order of integration is interchangeable and providing an operator-theoretic realization;
    \item[\textup{4.}] We generalize the theory to multiple tensor products and establish a multiple Fubini theorem;
    \item[\textup{5.}] We discuss concrete representations in classical function spaces and provide illustrative examples.
\end{enumerate}

Note that the term “integral" here is used metaphorically to describe the action of linear functionals on tensor product elements, rather than in the traditional measure-theoretic sense. This terminology is chosen to emphasize the analogy with integration in function spaces.

To the best of our knowledge, there is no existing literature that directly uses Grothendieck's constant as the boundedness condition for integral functionals. This paper is the first to define the Grothendieck functional integral and establish its Hilbert representation and Fubini theorem, extending the bilinear form representation results in \cite{Pisier2012}.

The structure of this paper is as follows: Section~\ref{sec:2} reviews preliminary knowledge on projective tensor products and Grothendieck's inequality; Section~\ref{sec:3} presents the definition and basic properties of Grothendieck functional integrals; Section~\ref{sec:4} establishes and proves the Fubini theorem; Section~\ref{sec:5} extends the theory to the multilinear setting; Section~\ref{sec:6} discusses concrete realizations in function spaces; Section~\ref{sec:pseudodifferential} explore applications to pseudodifferential operators. and Section~\ref{sec:8} concludes with a summary and outlook.

\section{Preliminaries and Notation}\label{sec:2}

\subsection{Projective tensor products}

Let $E,F$ be Banach spaces (over the real field $\mathbb{R}$ or the complex field $\mathbb{C}$). The algebraic tensor product $E\otimes F$ is the linear space spanned by elements of the form $e\otimes f$. The projective form $\|\cdot\|_\pi$ is defined by:
\[
\|u\|_\pi = \inf\left\{ \sum_{i=1}^n \|e_i\|_E \|f_i\|_F : u = \sum_{i=1}^n e_i \otimes f_i \right\}.
\]
The projective tensor product $E\otimes_{\pi}F$ is the completion of $E\otimes F$ with respect to the projective norm; it is a Banach space.

The norm of a continuous bilinear form $\varphi: E \times F \to \mathbb{K}$ (where $\mathbb{K}=\mathbb{R}$ or $\mathbb{C}$) is given by
\[
\|\varphi\| = \sup\left\{ |\varphi(e,f)| : \|e\|_E \leq 1, \|f\|_F \leq 1 \right\}.
\]
It is well known that the topological dual of the projective tensor product is isometrically isomorphic to the space of continuous bilinear forms:
\[
(E \otimes_\pi F)^* \cong \mathcal{B}( E, F; \mathbb{K} ),
\]
and for any $\mu \in (E \otimes_\pi F)^*$, the corresponding bilinear form $\varphi$ satisfies $\varphi(e,f) = \mu(e \otimes f)$ with $\|\mu\| = \|\varphi\|$; see for instance~\cite{Ryan2002}.

\subsection{Grothendieck's inequality}

Grothendieck's inequality admits several equivalent formulations. In this paper, we employ the following operator-theoretic version:

\begin{theorem}[Grothendieck's inequality]\label{thm:Grothendieck}
There exists a universal constant $K_G$ (with $K_G^\mathbb{R}\approx1.778$ for the real field and $K_G^\mathbb{C}\approx1.338$ for the complex field) such that for arbitrary Banach spaces $E,F$ and any continuous bilinear form $\varphi: E \times F \to \mathbb{K}$, there exist a Hilbert space $H$ and bounded linear operators $A:E\to H$, $B:F\to H$ satisfying:
\[
\varphi(e,f) = \langle Ae, Bf \rangle_H, \quad \forall e \in E, f \in F,
\]
and $\|A\| \|B\| \leq K_G \|\varphi\|$.

In particular, if $\|\varphi\| \leq 1$, then there exists such a representation with $\|A\| \leq 1$ and $\|B\| \leq K_G$.
\end{theorem}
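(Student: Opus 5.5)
The plan is to reduce to the classical matrix form of Grothendieck's inequality and then factor through Hilbert space via a Pietsch-type domination argument. Before doing this, I have to flag that the statement as written, for \emph{arbitrary} Banach spaces $E,F$, cannot be proved, because it is false. The factorization $\varphi(e,f)=\langle Ae,Bf\rangle_H$ says exactly that the operator $T_\varphi:E\to F^*$, $e\mapsto\varphi(e,\cdot)$, factors through a Hilbert space as $T_\varphi=B^*A$ (with $B^*$ the conjugate-linear adjoint in the complex case). Take $E=\ell_1$, $F=c_0$ and $\varphi(e,f)=\sum_n e_nf_n$, so that $\|\varphi\|=1$ and $T_\varphi=\mathrm{id}_{\ell_1}$. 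A factorization $\mathrm{id}_{\ell_1}=B^*A$ would make $A$ an isomorphic embedding of $\ell_1$ into $H$. That is impossible, since $\ell_1$ is not isomorphic to a subspace of a Hilbert space (it fails type $2$). So no constant whatsoever works, let alone $K_G$.

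I would therefore prove the theorem under its standard hypotheses: $E=C(K)$ and $F=C(L)$ for compact Hausdorff $K,L$. More generally, $E,F$ could be $\mathcal{L}_\infty$-spaces, or $E^*,F^*$ could be $L_1$-spaces. The steps are as follows.
\begin{enumerate}
\item[(i)] Start from the matrix form of Grothendieck's inequality, taken as the classical input. If $|\sum a_{ij}s_it_j|\le 1$ for all scalars with $|s_i|,|t_j|\le1$, then $|\sum a_{ij}\langle x_i,y_j\rangle|\le K_G$ for all unit vectors $x_i,y_j$ in any Hilbert space.
\item[(ii)] Use a partition-of-unity approximation, or the local $\ell_\infty^n$ structure, to deduce that for finitely many $e_i\in C(K)$ and $f_j\in C(L)$,
\[
\Bigl|\sum_i \varphi(e_i,f_i)\Bigr|\le K_G\|\varphi\|\,\Bigl\|\bigl(\textstyle\sum_i|e_i|^2\bigr)^{1/2}\Bigr\|_\infty\Bigl\|\bigl(\textstyle\sum_i|f_i|^2\bigr)^{1/2}\Bigr\|_\infty .
\]
\item[(iii)] Apply a Hahn--Banach/minimax (Pietsch) argument to the convex set of functions $(\nu,\lambda)\mapsto \tfrac12\sum_i\bigl(\int|e_i|^2d\nu+\int|f_i|^2d\lambda\bigr)-\mathrm{Re}\sum_i\varphi(e_i,f_i)/(K_G\|\varphi\|)$ on $P(K)\times P(L)$. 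This produces probability measures $\nu,\lambda$ with $|\varphi(e,f)|\le K_G\|\varphi\|\,\|e\|_{L_2(\nu)}\|f\|_{L_2(\lambda)}$.
\item[(iv)] Set $H=L_2(\nu)$, $A$ the formal inclusion $C(K)\to L_2(\nu)$ scaled by $(K_G\|\varphi\|)^{1/2}$, and $B$ the Riesz representative of the bounded form on $L_2(\nu)\times L_2(\lambda)$ composed with $C(L)\to L_2(\lambda)$, with the complementary scaling. This gives $\varphi(e,f)=\langle Ae,Bf\rangle$ with $\|A\|\|B\|\le K_G\|\varphi\|$.
\item[(v)] Rescale $A\mapsto A/\|A\|$ and $B\mapsto\|A\|B$ to obtain the ``in particular'' normalization $\|A\|\le1$, $\|B\|\le K_G$ when $\|\varphi\|\le1$.
\end{enumerate}

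The main obstacle is step (iii). Making the minimax rigorous requires compactness of $P(K)\times P(L)$ in the weak$^*$ topology, and convexity and concavity in the right variables. I would handle this with Ky Fan's lemma. I would also need care with the real versus complex constant in the passage from (i) to (ii).

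For the statement as literally given, with arbitrary Banach spaces, the correct conclusion is the $\ell_1\times c_0$ counterexample above. Any later result in the paper that relies on this version for general $E,F$ should be restricted to the $C(K)$ or $\mathcal{L}_\infty$ setting.
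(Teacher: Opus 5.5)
Your diagnosis is correct, and the paper offers no proof to compare it with. The paper states the theorem for arbitrary Banach spaces and only cites Grothendieck and Pisier. Their results concern $C(K)$-type ($\mathcal{L}_\infty$) spaces, or equivalently the matrix form, so the citation does not support the generality claimed.

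Your $\ell_1\times c_0$ example is valid. We have $\|\varphi\|=1$ and $T_\varphi=\mathrm{id}_{\ell_1}$, and any factorization gives $\|e\|_1\le\|B\|\,\|Ae\|$. So $A$ would map $\ell_1$ isomorphically onto a closed subspace of $H$. Non-reflexivity of $\ell_1$ already rules this out, so type $2$ is not needed.

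Since $\|\varphi\|=1\le K_G$, the same form is a Grothendieck functional integral on $\ell_1\otimes_\pi c_0$ with no Hilbert space representation. Hence Theorem~\ref{thm:Hilbert} and the Pietsch-measure claim in Remark~\ref{rem:optimal-estimate} also fail for general $E,F$.

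Two further defects of the literal statement are worth recording. Over $\mathbb{C}$, with $A,B$ linear, $f\mapsto\langle Ae,Bf\rangle_H$ is conjugate-linear. The displayed identity therefore forces $\varphi=0$, already for $E=F=\mathbb{C}$ and $\varphi(s,t)=st$. The correct version takes $B$ antilinear, or writes $\langle Ae,\overline{Bf}\rangle$ in an $L_2$ realization; your parenthetical about a conjugate-linear adjoint should be sharpened to exactly this. Also, the parenthetical values are not known values of $K_G$. Only two-sided bounds are established, roughly $1.676\le K_G^{\mathbb{R}}<1.783$ and $1.338\le K_G^{\mathbb{C}}\le 1.405$, so no argument can prove them.

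For the corrected statement, your route is the standard one and the steps hold up: matrix form, then the square-function inequality on $C(K)\times C(L)$ via partitions of unity, then Pietsch measures by Ky Fan, then factorization through $L_2(\nu)$ and rescaling. The functions in (iii) form a convex cone of continuous affine functions on the weak$^*$-compact convex set $P(K)\times P(L)$. Sums come from concatenating families, and multiples from replacing $e_i,f_i$ by $\sqrt{t}\,e_i,\sqrt{t}\,f_i$. Each function has nonnegative maximum by (ii) together with $ab\le\frac12(a^2+b^2)$. The common point, combined with the substitution $(e,f)\mapsto(te,f/t)$ and a phase rotation, then gives the product bound.

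One correction: the care needed in passing from (i) to (ii) is not about the constant, since $K_G^{\mathbb{K}}$ passes through unchanged. The issue is that the complex matrix form involves $\langle x_k,y_l\rangle$, which is conjugate-linear in $y_l$. You must therefore apply it to $y_l=(\overline{f_i(t_l)})_i$, which have the same norms.

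Note also that (i) is an input. What you establish is the passage from the matrix form to the factorization form on $C(K)$ pairs, not the inequality itself; since the paper proves nothing here, that is no loss relative to it.

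The trade-off is this. The paper's version would give Hilbert representations for every Grothendieck functional integral, but it is false. Yours is true, and it confines the downstream results to $C(K)$ or $\mathcal{L}_\infty$ pairs. It also covers Hilbert spaces such as the $L^2$ setting of Section~\ref{sec:6}, where the factorization is trivial with constant $1$.
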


Grothendieck's constant $K_G$ is the smallest constant satisfying the above property. This inequality reveals that any continuous bilinear form can be “lifted" to an inner product in Hilbert space, with the cost of lifting controlled by $K_G$.

\textbf{Known bounds for \(K_G\):} 
The exact values of the Grothendieck constants \(K_G^{\mathbb{R}}\) and \(K_G^{\mathbb{C}}\) are still not precisely known, but the following bounds are currently the best available:
\begin{itemize}
    \item \textbf{Real case (\(K_G^{\mathbb{R}}\)):} 
    \(1.67696 \leq K_G^{\mathbb{R}} \leq 1.782\) 
    (the lower bound is due to Davie \cite{Davie1977}, the upper bound to Grothendieck \cite{Grothendieck1956}, and improvements on the upper bound have been obtained by Braverman et al.).
    
    \item \textbf{Complex case (\(K_G^{\mathbb{C}}\)):} 
    \(1.33807 \leq K_G^{\mathbb{C}} \leq 1.40491\) 
    (the lower bound is due to Haagerup \cite{Haagerup1985}, the upper bound to Davie \cite{Davie1973}).
\end{itemize}
For a comprehensive survey of known bounds and historical developments, see [\cite{Pisier2012}, Section 2].

This fundamental result originates from Grothendieck's seminal work on the metric theory of tensor products~\cite{Grothendieck1956}. For a modern exposition and several equivalent formulations—including the operator-theoretic version used here---we refer to the comprehensive survey~\cite{Pisier2012}.

\section{Grothendieck Functional Integrals}\label{sec:3}

\subsection{Definition and basic properties}

\begin{definition}[Grothendieck functional integral]
Let $E,F$ be Banach spaces. A linear functional $\mu : E \otimes F \to \mathbb{K}$ is called a Grothendieck functional integral if $\|\mu\| \leq K_G$, where $K_G$ denotes Grothendieck's constant. In this case, we define the Grothendieck norm of $\mu$ as $\|\mu\|_G := \|\mu\|$.

The set of all Grothendieck functionals is denoted by $\mathcal{G}(E,F) = \{\mu \in (E \otimes_\pi F)^* : \|\mu\| \leq K_G\}$.
\end{definition}

By definition, $\mu$ extends uniquely to a continuous linear functional on $E \otimes_\pi F$, and the extended norm satisfies $\|\mu\| = \|\mu\|_G \leq K_G$. Therefore, Grothendieck functional integrals constitute a subset of $(E \otimes_\pi F)^*$:
\[
\mathcal{G}(E,F) = \left\{ \mu \in (E \otimes_\pi F)^* : \|\mu\|_G \leq K_G \right\}.
\]
Note that $\mathcal{G}(E,F)$ is closed in the operator norm, and $\|\cdot\|_G$ coincides with the operator norm.

\begin{proposition}[Correspondence with Bilinear Forms]\label{prop:Correspondence}
Let $\mu$ be a Grothendieck functional integral. Define $\varphi_\mu: E \times F \to \mathbb{K}$ by $\varphi_\mu(e,f) = \mu(e \otimes f)$. Then $\varphi_\mu$ is a continuous bilinear form with $\|\varphi_\mu\| = \|\mu\|_G$. Conversely, for any continuous bilinear form $\varphi$ satisfying $\|\varphi\| \leq K_G$, there exists a unique Grothendieck integral $\mu$ such that $\varphi_\mu=\varphi$.
\end{proposition}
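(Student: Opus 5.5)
The plan is to reduce everything to the isometric identification $(E\otimes_\pi F)^*\cong\mathcal{B}(E,F;\mathbb{K})$ recalled in Section~\ref{sec:2}, spelling out the two directions so that the Grothendieck bound $K_G$ is visibly preserved.

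\textbf{Forward direction.} Let $\mu\in\mathcal{G}(E,F)$, viewed through its unique continuous extension to $E\otimes_\pi F$. Bilinearity of $\varphi_\mu(e,f)=\mu(e\otimes f)$ follows from bilinearity of $(e,f)\mapsto e\otimes f$ and linearity of $\mu$. For the norm, $\|e\otimes f\|_\pi\le \|e\|\|f\|$ (the one-term representation), so $|\varphi_\mu(e,f)|\le \|\mu\|\,\|e\|\|f\|$. This gives continuity and $\|\varphi_\mu\|\le\|\mu\|_G$.

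For the reverse inequality, take $u=\sum_{i=1}^n e_i\otimes f_i$ in the algebraic tensor product. Then
\[
|\mu(u)|\le \sum_{i=1}^n|\varphi_\mu(e_i,f_i)|\le \|\varphi_\mu\|\sum_{i=1}^n\|e_i\|\|f_i\|.
\]
Taking the infimum over all representations of $u$ gives $|\mu(u)|\le\|\varphi_\mu\|\,\|u\|_\pi$. Since $E\otimes F$ is dense in $E\otimes_\pi F$ and $\mu$ is continuous, $\|\mu\|\le\|\varphi_\mu\|$. Hence $\|\varphi_\mu\|=\|\mu\|_G$.

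\textbf{Converse.} Given $\varphi$ with $\|\varphi\|\le K_G$, the universal property of the algebraic tensor product yields a unique linear map $\mu_0:E\otimes F\to\mathbb{K}$ with $\mu_0(e\otimes f)=\varphi(e,f)$. The infimum estimate above shows $|\mu_0(u)|\le\|\varphi\|\,\|u\|_\pi$. So $\mu_0$ is $\|\cdot\|_\pi$-bounded and extends uniquely by continuity to $\mu\in(E\otimes_\pi F)^*$. By the forward direction, $\|\mu\|=\|\varphi\|\le K_G$, so $\mu\in\mathcal{G}(E,F)$, and $\varphi_\mu=\varphi$ by construction.

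For uniqueness, suppose $\mu,\mu'\in\mathcal{G}(E,F)$ both satisfy $\varphi_\mu=\varphi_{\mu'}=\varphi$. Then they agree on elementary tensors, hence by linearity on $E\otimes F$, hence by density and continuity on $E\otimes_\pi F$.

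\textbf{Main obstacle.} The only delicate point is well-definedness of $\mu_0$ on the algebraic tensor product: a tensor has many representations $\sum e_i\otimes f_i$. I would settle this by invoking the universal property of $E\otimes F$ for bilinear maps, rather than defining $\mu_0$ on representations directly. After that, every norm estimate is the single infimum computation above.
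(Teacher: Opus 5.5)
Your proof is correct. Structurally it matches the paper: both directions reduce to the duality $(E\otimes_\pi F)^*\cong\mathcal{B}(E,F;\mathbb{K})$. The difference is in how much of that duality is actually proved.

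The paper proves only the easy inequality $\|\varphi_\mu\|\le\|\mu\|_G$ by hand. It then cites the isometric duality (Pisier, Ryan) for three things: the reverse inequality, existence in the converse, and uniqueness in the converse. You instead prove the duality yourself:
\begin{itemize}
\item the reverse inequality, via the triangle inequality and the infimum over representations;
\item existence, via the universal property of $E\otimes F$, the same infimum estimate, and extension by continuity;
\item uniqueness, via density of $E\otimes F$ in $E\otimes_\pi F$.
\end{itemize}

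The paper's version is shorter. Yours is self-contained and shows that $K_G$ plays no role in the argument: the proposition is just the general isometric duality restricted to the closed ball of radius $K_G$. It also avoids the paper's somewhat circular remark that $\|\mu\|_G$ is ``the smallest constant $C\le K_G$''.

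One small point of phrasing. In the converse you invoke ``the forward direction'' for $\mu$ before knowing $\mu\in\mathcal{G}(E,F)$. This is harmless, because your forward argument never uses the bound $K_G$. In any case, your infimum estimate already gives $\|\mu\|\le\|\varphi\|\le K_G$ directly.
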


\begin{proof}
For the forward direction, note that $\varphi_\mu$ is bilinear by the universal property of the algebraic tensor product. For any $e \in E$, $f\in F$ with $\|e\|_E\le1$, $\|f\|_F\le1$, we have $\|e \otimes f\|_\pi = \|e\|_E \|f\|_F \leq 1$, hence
\[
|\varphi_\mu(e,f)| = |\mu(e \otimes f)| \leq \|\mu\|_G \|e \otimes f\|_\pi \leq \|\mu\|_G.
\]
This shows $\|\varphi_\mu\| \leq \|\mu\|_G$. Conversely, by the fundamental duality theorem $(E \otimes_\pi F)^* \cong \mathcal{B}(E,F;\mathbb{K})$, the isometric isomorphism is given by $\mu\mapsto\varphi_u$ where $\phi_\mu(e,f)=\mu(e\otimes f)$ for all $e\in E,f\in F$. This implies that $\|\mu\|_{(E\otimes_\pi F)}=\|\varphi_u\|_{\mathcal{B}(E,F;\mathbb{K})}$ (see [\cite{Pisier2012}, Theorem 2.2.1]). Therefore, when $\mu\in\mathcal{G}(E,F)$, we have $\|\mu\|_G=\|\mu\|_{(E\otimes_\pi F)}=\|\varphi_u\|$. Since $\|\mu\|_G$ is the smallest constant $C\le K_G$ satisfying $|\mu(x)| \leq C\|x\|_\pi$ for all $x\in E\otimes F$, and $\|\mu\|$ itself satisfies this inequality, we conclude $\|\mu\|_G = \|\mu\| = \|\varphi_\mu\|$ when $\mu \in \mathcal{G}(E,F)$.

For the converse, let $\varphi \in \mathcal{B}(E,F;\mathbb{K})$ with $\|\varphi\| \leq K_G$. By the isometric isomorphism $(E \otimes_\pi F)^* \cong \mathcal{B}(E,F;\mathbb{K})$, there exists a unique $\mu\in(E\otimes_\pi F)^*$ such that $\varphi_\mu=\varphi$ and $\|\mu\| = \|\varphi\| \leq K_G$. Therefore $\|\mu\|_G = \|\mu\| \leq K_G$, which implies $\mu \in \mathcal{G}(E,F)$.
\end{proof}

\subsection{Hilbert Space Representation}

From Grothendieck's inequality, we immediately obtain:

\begin{theorem}[Hilbert Space Representation Theorem]\label{thm:Hilbert}
Let $\mu$ be a Grothendieck functional integral with corresponding bilinear form $\varphi$. Then there exist a Hilbert space $H$ and bounded linear operators $A:E\to H$, $B: F \to H$ such that:
\[
\varphi(e,f) = \langle Ae, Bf \rangle_H, \quad \forall e \in E, f \in F,
\]
and $\|A\| \|B\| \leq K_G \|\varphi\| \leq K_G^2$. Furthermore, if $\|\mu\|_G = 1$, then there exists such a representation with $\|A\|\le1$ and $\|B\| \leq K_G$.
\end{theorem}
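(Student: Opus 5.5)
The plan is to obtain the statement as a direct consequence of Theorem~\ref{thm:Grothendieck}, combined with the norm identification in Proposition~\ref{prop:Correspondence}.

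\textbf{Step 1.} Given a Grothendieck functional integral $\mu$, let $\varphi=\varphi_\mu$, so that $\varphi(e,f)=\mu(e\otimes f)$. By Proposition~\ref{prop:Correspondence}, $\varphi$ is a continuous bilinear form with $\|\varphi\|=\|\mu\|_G\le K_G$.

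\textbf{Step 2.} Apply Theorem~\ref{thm:Grothendieck} to $\varphi$. This produces a Hilbert space $H$ and bounded operators $A:E\to H$, $B:F\to H$ such that $\varphi(e,f)=\langle Ae,Bf\rangle_H$ for all $e\in E$, $f\in F$, and $\|A\|\|B\|\le K_G\|\varphi\|$. Inserting $\|\varphi\|\le K_G$ from Step 1 gives the chain $\|A\|\|B\|\le K_G\|\varphi\|\le K_G^2$.

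\textbf{Step 3.} For the normalized case $\|\mu\|_G=1$ we have $\|\varphi\|=1$, so the ``in particular'' clause of Theorem~\ref{thm:Grothendieck} applies directly and yields $\|A\|\le 1$ and $\|B\|\le K_G$.

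If one prefers to derive Step 3 from the general estimate rather than quote the clause, rescale. The degenerate case is trivial: if $A=0$ or $B=0$, then $\varphi=0$, which contradicts $\|\varphi\|=1$. Otherwise set $A'=A/\|A\|$ and $B'=\|A\|B$. This rescaling preserves the inner product, $\langle A'e,B'f\rangle=\langle Ae,Bf\rangle$, and gives $\|A'\|=1$ and $\|B'\|=\|A\|\|B\|\le K_G$.

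\textbf{Main obstacle.} There is essentially none. The only point needing care is the bookkeeping that $\|\varphi\|$ equals $\|\mu\|_G$, which is exactly what Proposition~\ref{prop:Correspondence} supplies.

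One subtlety is that the bound $K_G\|\varphi\|$ in Theorem~\ref{thm:Grothendieck} is taken as stated in the paper, without a factor of $\pi$ or other normalization issues. The proof simply inherits whatever form of Grothendieck's inequality the paper assumes.
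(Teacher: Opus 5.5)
Your proposal is correct relative to Theorem~\ref{thm:Grothendieck} as the paper states it, and it is essentially the paper's argument: the paper normalizes to $\psi=\varphi/\|\varphi\|$, applies the ``in particular'' clause and rescales both operators by $\|\varphi\|^{1/2}$, whereas you use the general bound $\|A\|\|B\|\le K_G\|\varphi\|$ directly together with $\|\varphi\|=\|\mu\|_G\le K_G$ from Proposition~\ref{prop:Correspondence}, which is, if anything, more direct. Be aware, though, that both proofs stand or fall with Theorem~\ref{thm:Grothendieck}, which is false for arbitrary Banach spaces (Grothendieck's theorem gives it for, e.g., $C(K)$-spaces): the duality pairing on $\ell^1\times c_0$ has norm $1$, yet any representation $\langle Ae,Bf\rangle_H$ would give $\|x\|_1\le\|B\|\,\|Ax\|_H$ and hence embed the non-reflexive space $\ell^1$ into a Hilbert space, so the real ``main obstacle'' lies in the result you are quoting rather than in your bookkeeping.
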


\begin{proof}
If $\phi=0$, the statement is trivial. Assume $\phi\neq 0$ and set $\psi = \phi/\|\phi\|$, so that $\|\psi\|=1\leq K_G$. Applying Grothendieck's inequality (Theorem 2.1) to $\psi$, we obtain a Hilbert space $H$ and operators $\tilde{A}:E\to H$, $\tilde{B}:F\to H$ such that
\[
\psi(e,f) = \langle \tilde{A}e, \tilde{B}f\rangle_H, \quad \|\tilde{A}\|\leq 1, \ \|\tilde{B}\|\leq K_G.
\]
Define $A = \|\phi\|^{1/2}\tilde{A}$ and $B = \|\phi\|^{1/2}\tilde{B}$. Then
\[
\phi(e,f) = \|\phi\|\psi(e,f) = \langle Ae, Bf\rangle_H.
\]
Moreover,
\[
\|A\|\|B\| = \|\phi\|\cdot\|\tilde{A}\|\|\tilde{B}\| \leq \|\phi\|\cdot K_G \leq K_G^2,
\]
where the last inequality follows from Proposition 3.2, which gives $\|\phi\| = \|\mu\|_G \leq K_G$.

For the normalized case $\|\mu\|_G = 1$, we have $\|\phi\| = 1$, and applying Theorem 2.1 directly to $\phi$ yields the desired representation with $\|A\|\leq 1$ and $\|B\|\leq K_G$.
\end{proof}

\begin{remark}\label{rem:optimal-estimate}
The estimate $\|A\|\|B\|\leq K_G^2$ is sufficient but not necessarily optimal. In fact, via the Grothendieck--Pietsch factorization technique \cite[Theorem 3.2]{Pisier2012}, one can obtain sharper estimates. Specifically, Grothendieck's inequality is equivalent to: for any Banach spaces $E,F$ and continuous bilinear form $\phi:E\times F\to\mathbb{K}$, there exist probability measures $\mu$ on $B_{E^*}$ (a suitable compactification of the unit ball) and $\nu$ on $B_{F^*}$, together with bounded linear operators $T_1:E\to L^2(\mu)$, $T_2:F\to L^2(\nu)$ such that $\|T_1\|\|T_2\|\leq K_G\|\phi\|$. This factorization shows that the Hilbert space $H$ in Theorem \ref{thm:Hilbert} can be taken as a subspace of $L^2(\mu)\oplus L^2(\nu)$, and the product of operator norms can be controlled by $K_G\|\phi\|$ rather than $K_G^2$. However, for the sake of theoretical simplicity, we use the above sufficient estimate in this paper.
\end{remark}

This representation theorem is the core characteristic of Grothen\-dieck integrals: it realizes the abstract integral as an inner product in Hilbert space, thereby providing geometric tools for computation and estimation.

\subsection{Basic examples}

\begin{example}
Let $E = F = \ell^2$, and let $A=(a_{ij})$ be a matrix satisfying $\|A\|_{\ell^2 \to \ell^2} \leq 1$. Define the bilinear form:
\[
\varphi(x,y) = \sum_{i,j} a_{ij} x_i y_j.
\]
Then $\|\varphi\| = \|A\|_{\ell^2 \to \ell^2} \leq 1$, so the corresponding $\mu$ is a Grothendieck integral with $\|\mu\|_G \leq K_G$. In fact, there exist a Hilbert space $H$ and operators $A$, $B:\ell^2\to H$ such that
\[
\varphi(x,y) = \langle Ax, By \rangle_H.
\]
\end{example}

\begin{example}
Let $E = C(S)$, $F = C(T)$, where $S, T$ are compact Hausdorff spaces. Any continuous bilinear form $\phi$ corresponds to a bimeasure (or a Radon measure on $S \times T$). If $\|\phi\| \leq 1$, then Grothendieck's inequality, combined with the theory of $2$-summing operators (or the “little" Grothendieck inequality), guarantees the existence of finite positive measures $\mu$ on $S$ and $\nu$ on $T$ such that:
\[
|\phi(f,g)| \leq K_G \left( \int_S |f|^2 d\mu \right)^{1/2} \left( \int_T |g|^2 d\nu \right)^{1/2}.
\]
(See, e.g., [\cite{Pisier2012}, Theorem 4.2] for a precise statement and proof in the context od $C(K)$ spaces.)

The total masses of $\mu$ and $\nu$ can be scaled appropriately to satisfy the inequality by multiplying each measure by a positive constant so that their product equals $1$. Therefore, the linear functional induced by $\phi$ is a Grothendieck integral.
\end{example}

\section{Fubini theorems}\label{sec:4}

\subsection{Partial integration operators}

\begin{definition}[Partial integration] Let $\mu \in \mathcal{G}(E,F)$ with corresponding bilinear form $\varphi$. For fixed $e\in E$, define $\mu_e \in F^*$ by:
\[
\mu_e(f) = \varphi(e,f), \quad \forall f \in F.
\]
Similarly, for fixed $f\in F$, define $\mu_f\in E^*$ by $\mu_f(e)=\varphi(e,f)$.

It is easily seen that $\|\mu_e\|_{F^*} \leq \|\varphi\| \|e\|_E \leq K_G \|e\|_E$ and $\|\mu_f\|_{E^*} \leq K_G \|f\|_F$.
\end{definition}

\begin{lemma}\label{thm:4.2}
The map $e\mapsto \mu_e$ is linear, thereby defining a bounded linear operator $T_\mu:E\to F^*$ satisfying $T_\mu(e) = \mu_e$ with $\|T_\mu\| = \|\varphi\| \leq K_G$. Similarly, define $S_\mu: F \to E^*$ by $S_\mu(f) = \mu_f$, and we also have $\|S_\mu\| = \|\varphi\|$.
\end{lemma}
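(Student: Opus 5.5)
The plan is to verify linearity directly from bilinearity of $\varphi$, and then to compute the operator norm by unwinding the definitions of the norms in $F^*$ and in $\mathcal{B}(E,F;\mathbb{K})$, so that $\|T_\mu\|$ and $\|\varphi\|$ are visibly the same double supremum.

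\textbf{Linearity.} For $e_1,e_2\in E$, scalars $\alpha,\beta$, and every $f\in F$, linearity of $\varphi$ in the first variable gives
\[
\mu_{\alpha e_1+\beta e_2}(f)=\varphi(\alpha e_1+\beta e_2,f)=\alpha\mu_{e_1}(f)+\beta\mu_{e_2}(f).
\]
Hence $T_\mu(\alpha e_1+\beta e_2)=\alpha T_\mu e_1+\beta T_\mu e_2$ in $F^*$. Linearity of $\varphi$ in the second variable ensures $\mu_e$ is linear. The estimate $\|\mu_e\|_{F^*}\le\|\varphi\|\|e\|_E$ from the preceding definition ensures $\mu_e$ is continuous, so $T_\mu$ indeed maps into $F^*$.

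\textbf{Norm identity.} I would write
\[
\|T_\mu\|=\sup_{\|e\|\le1}\|\mu_e\|_{F^*}=\sup_{\|e\|\le1}\sup_{\|f\|\le1}|\varphi(e,f)|=\|\varphi\|.
\]
The first equality is the definition of the operator norm, the second is the definition of the dual norm on $F^*$, and the third is the definition of the bilinear-form norm recalled in Section~\ref{sec:2}. Concretely, the inequality $\le$ is the already-noted bound $\|\mu_e\|\le\|\varphi\|\|e\|$. For $\ge$, given $\varepsilon>0$ I would pick unit-ball vectors $e,f$ with $|\varphi(e,f)|>\|\varphi\|-\varepsilon$; then $\|T_\mu\|\ge\|\mu_e\|\ge|\mu_e(f)|>\|\varphi\|-\varepsilon$. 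Finally, $\|\varphi\|=\|\mu\|_G\le K_G$ by Proposition~\ref{prop:Correspondence}.

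\textbf{The operator $S_\mu$.} This case is symmetric. Apply the same argument to the transposed bilinear form $\varphi^t(f,e)=\varphi(e,f)$ on $F\times E$. It clearly has $\|\varphi^t\|=\|\varphi\|$, and the operator it induces is exactly $S_\mu$.

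No step is a genuine obstacle; the only point needing care is the reverse inequality $\|T_\mu\|\ge\|\varphi\|$, which the $\varepsilon$-approximation handles.
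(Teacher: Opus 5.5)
Your proposal is correct and takes essentially the same route as the paper: linearity from the bilinearity of $\varphi$, the upper bound from $\|\mu_e\|_{F^*}\le\|\varphi\|\|e\|_E$, the reverse inequality via an $\varepsilon$-near-maximizing pair $(e,f)$, and $S_\mu$ by symmetry. Your version is slightly more careful than the paper's, since you check that $\mu_e$ actually lies in $F^*$ and you take the near-maximizers in the closed unit balls rather than requiring norm-one vectors.
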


\begin{proof} Linearity: For any $e_1, e_2\in E$ and scalar $\alpha$, we verify:
\begin{align*}
T_\mu(e_1 + \alpha e_2)(f) &= \mu_{e_1 + \alpha e_2}(f) = \varphi(e_1 + \alpha e_2, f) = \\
\varphi(e_1, f) + \alpha \varphi(e_2, f) &= \mu_{e_1}(f) + \alpha \mu_{e_2}(f) = (T_\mu(e_1) + \alpha T_\mu(e_2))(f).
\end{align*}
Since this holds for all $f\in F$, we have $T_\mu(e_1 + \alpha e_2) = T_\mu(e_1) + \alpha T_\mu(e_2)$.

\begin{align*}
\text{Norm equality: By definition, } & \|T_\mu\| = \sup_{\|e\|_E \leq 1} \|T_\mu(e)\|_{F^*} = \sup_{\|e\|_E \leq 1} \|\mu_e\|_{F^*}. \\
\text{For } \|e\|_E \leq 1: \quad & \|\mu_e\|_{F^*} = \sup_{\|f\|_F \leq 1} |\mu_e(f)| = \sup_{\|f\|_F \leq 1} |\varphi(e,f)| \\
& \leq \sup_{\|f\|_F \leq 1} \|\varphi\| \|e\|_E \|f\|_F \leq \|\varphi\|.
\end{align*}

Hence $\|T_\mu\| \leq \|\varphi\|$. Conversely, for any $\varepsilon>0$, choose $w_0,f_0$ with $\|e_0\|_E = \|f_0\|_F = 1$ such that $|\varphi(e_0, f_0)| > \|\varphi\| - \varepsilon$. Then:
\[
\|T_\mu\| \geq \|T_\mu(e_0)\|_{F^*} = \sup_{\|f\|_F \leq 1} |\varphi(e_0, f)| \geq |\varphi(e_0, f_0)| > \|\varphi\| - \varepsilon.
\]
Letting $\varepsilon \to 0$ yields $\|T_\mu\| \geq \|\varphi\|$. Therefore $\|T_\mu\| = \|\varphi\|$. The argument for $S_\mu$ is identical.
\end{proof}

\subsection{Abstract Fubini theorem}

\begin{theorem}[Fubini theorem]\label{thm:Fubini}
Let $\mu \in \mathcal{G}(E, F)$. Then for any $x = \sum_{i=1}^n e_i \otimes f_i \in E \otimes F$, we have:
\[
\mu(x) = \sum_{i=1}^n T_\mu(e_i)(f_i) = \sum_{i=1}^n S_\mu(f_i)(e_i).
\]
That is, the integral can be computed iteratively in either order with the same result.
\end{theorem}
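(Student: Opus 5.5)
The proof is a direct unwinding of definitions, with linearity of $\mu$ on the algebraic tensor product as the key input.

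Fix $x=\sum_{i=1}^n e_i\otimes f_i\in E\otimes F$. Since $\mu$ is a linear functional on $E\otimes F$, we have
\[
\mu(x)=\sum_{i=1}^n \mu(e_i\otimes f_i).
\]
By Proposition~\ref{prop:Correspondence}, each term equals $\varphi(e_i,f_i)$, where $\varphi=\varphi_\mu$ is the associated bilinear form.

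Next I will invoke the definition of partial integration and Lemma~\ref{thm:4.2}. By construction,
\[
T_\mu(e_i)(f_i)=\mu_{e_i}(f_i)=\varphi(e_i,f_i)
\quad\text{and}\quad
S_\mu(f_i)(e_i)=\mu_{f_i}(e_i)=\varphi(e_i,f_i).
\]
Summing over $i$ gives both equalities at once:
\[
\mu(x)=\sum_{i=1}^n \varphi(e_i,f_i)=\sum_{i=1}^n T_\mu(e_i)(f_i)=\sum_{i=1}^n S_\mu(f_i)(e_i).
\]

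The only point needing care is well-definedness, since the representation $x=\sum e_i\otimes f_i$ is not unique. This causes no trouble. The left side $\mu(x)$ depends only on $x$, so once the termwise identity holds for an arbitrary representation, the two iterated sums are automatically independent of the chosen representation. Equivalently, by the universal property of the algebraic tensor product, the bilinear map $(e,f)\mapsto T_\mu(e)(f)$ induces a unique linear map on $E\otimes F$. That map agrees with $\mu$ on elementary tensors, so it coincides with $\mu$; the same holds for $S_\mu$.

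I would then remark on the extension to the completion $E\otimes_\pi F$. Both sides are continuous with respect to $\|\cdot\|_\pi$, with constant at most $K_G$ by Lemma~\ref{thm:4.2}. For $x=\sum_{i=1}^\infty e_i\otimes f_i$ with $\sum_i\|e_i\|\|f_i\|<\infty$, the series $\sum_i T_\mu(e_i)(f_i)$ converges absolutely, and its sum equals $\mu(x)$ by density. This extension is a side remark and is not required by the statement as given.
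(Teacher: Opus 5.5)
Your proof is correct and follows the paper's argument: linearity of $\mu$ reduces $\mu(x)$ to $\sum_i \varphi(e_i,f_i)$, and the definitions of $T_\mu$ and $S_\mu$ identify each term with $T_\mu(e_i)(f_i)$ and $S_\mu(f_i)(e_i)$. Your remarks on independence of the chosen representation and on extending to $E\otimes_\pi F$ are sound; the paper omits the first and treats the second in its next corollary.
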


\begin{proof} We establish the first equality by direct computation:
\begin{align*}
\mu(x) &= \mu\left(\sum_{i=1}^n e_i \otimes f_i\right) = \sum_{i=1}^n \mu(e_i \otimes f_i) = \sum_{i=1}^n \varphi(e_i, f_i) \\
       &= \sum_{i=1}^n \mu_{e_i}(f_i) = \sum_{i=1}^n T_\mu(e_i)(f_i).
\end{align*}
For the second equality, observe that $\varphi(e_i, f_i) = \mu_{f_i}(e_i)$ by symmetry, hence:
\[
\mu(x) = \sum_{i=1}^n \varphi(e_i, f_i) = \sum_{i=1}^n \mu_{f_i}(e_i) = \sum_{i=1}^n S_\mu(f_i)(e_i).
\]
\end{proof}

This theorem shows that the Grothendieck integral $\mu$ decomposes as a composition of two linear operators. Specifically, consider the natural pairing $\langle \cdot, \cdot \rangle : F^* \times F \to \mathbb{K}$. Then:
\[
\mu(x) = \langle (T_\mu \otimes I)(x) \rangle,
\]
where $(T_\mu \otimes I) : E \otimes F \to F^* \otimes F$ sends $e \otimes f$ to $T_\mu(e) \otimes f$, followed by the pairing to obtain a scalar. Similarly, using $S_\mu$ yields the alternative decomposition.

\begin{corollary}[Continuous extension] Let $\mu\in\mathcal{G}(E,F)$. For any $x\in E\otimes_{\pi}F$, take a sequence $\{x_n\}\subset E\otimes F$ such that $x_n\to x$ in the projective norm, where $x_n = \sum_{i=1}^{N_n} e_i^{(n)}\otimes f_i^{(n)}$. Then
\[
\mu(x) = \lim_{n\to\infty}\sum_{i=1}^{N_n} T_\mu(e_i^{(n)})(f_i^{(n)}) = \lim_{n\to\infty}\sum_{i=1}^{N_n} S_\mu(f_i^{(n)})(e_i^{(n)}).
\]
\end{corollary}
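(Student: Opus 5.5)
The plan is to combine two facts: the continuity of the extended functional $\mu$ on $E\otimes_\pi F$, and the algebraic identity of Theorem~\ref{thm:Fubini}, which already holds for each finite tensor $x_n$. First I will note that, by definition, $\mu\in\mathcal{G}(E,F)$ is a bounded linear functional on $E\otimes_\pi F$ with $|\mu(y)|\le \|\mu\|_G\|y\|_\pi\le K_G\|y\|_\pi$. The value $\mu(x)$ for $x$ in the completion is therefore the unique continuous extension from the dense subspace $E\otimes F$. For any sequence $x_n\to x$ in $\|\cdot\|_\pi$ we get
\[
|\mu(x)-\mu(x_n)| = |\mu(x-x_n)| \le K_G\|x-x_n\|_\pi \to 0,
\]
so $\mu(x)=\lim_n \mu(x_n)$.

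Next I apply Theorem~\ref{thm:Fubini} to each $x_n=\sum_{i=1}^{N_n} e_i^{(n)}\otimes f_i^{(n)}\in E\otimes F$. This gives
\[
\mu(x_n)=\sum_{i=1}^{N_n} T_\mu(e_i^{(n)})(f_i^{(n)})=\sum_{i=1}^{N_n} S_\mu(f_i^{(n)})(e_i^{(n)}).
\]
Substituting this into the limit from the first step yields both claimed equalities at once. In particular, both limits exist and agree.

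Finally, the limit must not depend on the choice of representation of $x_n$ or on the approximating sequence. Independence of the representation holds because $\mu$ is well defined on the algebraic tensor product, and Theorem~\ref{thm:Fubini} holds for every representation. Independence of the sequence holds because any two approximating sequences have $\|x_n-x_n'\|_\pi\to0$, so the bound from the first step forces the same limit. This well-definedness point is the only mildly delicate step, and the Lipschitz estimate with constant $K_G$ disposes of it.
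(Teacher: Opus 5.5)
Your proof is correct and matches the paper's argument: the continuity of $\mu$ on $E\otimes_\pi F$ gives $\mu(x_n)\to\mu(x)$, and Theorem~\ref{thm:Fubini}, applied to each finite tensor $x_n$, identifies $\mu(x_n)$ with both iterated sums. Your closing well-definedness paragraph is harmless but redundant, since $\mu(x)$ is already a fixed value of the extended functional and your first step shows that every approximating sequence converges to it.
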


\begin{proof}
By the definition of the projective tensor product, every $x\in E\otimes_{\pi}F$ can be approximated by finite sums in the projective norm; this is a standard fact (see \cite[Proposition 2.1]{Ryan2002} or \cite[Section 3]{Pisier2012}). Since $\mu\in (E\otimes_{\pi}F)^*$ is continuous with respect to the projective norm, and $x_n\to x$ implies $\mu(x_n)\to\mu(x)$, applying Theorem 4.3 to each $x_n$ yields the result. 
\end{proof}

\begin{remark}
The existence of such an approximating sequence follows from the construction of the projective tensor product: by definition, $E\otimes_{\pi}F$ is the completion of the algebraic tensor product $E\otimes F$ with respect to the projective norm. Hence, any $x\in E\otimes_{\pi}F$ can be expressed as the limit of a sequence of finite sums. For a more detailed discussion on approximation properties of projective tensor products, we refer to \cite[Chapter 3]{Ryan2002}.
\end{remark}

\subsection{Operator form of Fubini theorem}

We may express the Fubini theorem in terms of operator compositions.

\begin{theorem}[Operator form] Let $\mu \in \mathcal{G}(E, F)$. Then there exist unique bounded linear operators 
$T_\mu: E \to F^*$ and $S_\mu: F \to E^*$ such that the following diagrams commute:
\[
\begin{tikzcd}[column sep=small, row sep=small]
E \otimes_\pi F \arrow[r, "\mu"] \arrow[d, "T_\mu \otimes I"'] & \mathbb{K} \\
F^* \otimes_\pi F \arrow[ru, "{\langle \cdot, \cdot \rangle}"'] & 
\end{tikzcd}
\quad\text{and}\quad
\begin{tikzcd}[column sep=small, row sep=small]
E \otimes_\pi F \arrow[r, "\mu"] \arrow[d, "I \otimes S_\mu"'] & \mathbb{K} \\
E \otimes_\pi E^* \arrow[ru, "{\langle \cdot, \cdot \rangle}"'] & 
\end{tikzcd}
\]
where $\langle \cdot, \cdot \rangle$ denotes the natural pairing. Moreover, 
$\|T_\mu\| = \|S_\mu\| = \|\mu\|_G$.
\end{theorem}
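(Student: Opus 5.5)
We take $T_\mu$ and $S_\mu$ to be the operators of Lemma~\ref{thm:4.2}. The proof then has three parts: the relevant maps are well-defined and bounded on the completed projective tensor products, the diagrams commute on elementary tensors (and hence everywhere by density), and the operators are unique. The norm identities $\|T_\mu\|=\|S_\mu\|=\|\varphi\|=\|\mu\|_G$ come directly from Lemma~\ref{thm:4.2} and Proposition~\ref{prop:Correspondence}.

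\textbf{Boundedness.} First we check that $T_\mu\otimes I$ extends to a bounded map $E\otimes_\pi F\to F^*\otimes_\pi F$. On a representation $x=\sum_i e_i\otimes f_i$ we have
\[
\sum_i\|T_\mu e_i\|\,\|f_i\|\le \|T_\mu\|\sum_i\|e_i\|\,\|f_i\|.
\]
Taking the infimum over representations gives $\|(T_\mu\otimes I)x\|_\pi\le\|T_\mu\|\,\|x\|_\pi$. The map is well defined on $E\otimes F$ because it is induced by the bilinear map $(e,f)\mapsto T_\mu e\otimes f$. We then extend it by continuity.

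Next, the pairing $\langle\cdot,\cdot\rangle:F^*\otimes_\pi F\to\mathbb{K}$, $g\otimes f\mapsto g(f)$, is the linear functional associated with the bilinear form $(g,f)\mapsto g(f)$, which has norm $1$. By the duality $(F^*\otimes_\pi F)^*\cong\mathcal{B}(F^*,F;\mathbb{K})$ it is a bounded functional of norm $\le 1$.

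For the second diagram we define $I\otimes S_\mu: E\otimes_\pi F\to E\otimes_\pi E^*$ in the same way, using the pairing $e\otimes g\mapsto g(e)$.

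\textbf{Commutativity.} On finite sums $x=\sum_i e_i\otimes f_i$, commutativity is exactly Theorem~\ref{thm:Fubini}:
\[
\langle (T_\mu\otimes I)x\rangle=\sum_i T_\mu(e_i)(f_i)=\mu(x),
\]
and similarly for $S_\mu$. Both sides are continuous on $E\otimes_\pi F$: the left side is a composition of bounded maps, and $\mu$ is bounded. Since $E\otimes F$ is dense, the identity extends to all of $E\otimes_\pi F$, as in the Continuous Extension corollary.

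\textbf{Uniqueness.} This is the step needing care, because "unique" must mean unique among bounded operators making the diagram commute. Suppose $T:E\to F^*$ is bounded and $\langle (T\otimes I)x\rangle=\mu(x)$ for all $x$. Evaluating at $x=e\otimes f$ gives
\[
T(e)(f)=\mu(e\otimes f)=\varphi(e,f)=T_\mu(e)(f)
\]
for all $f\in F$. Hence $T(e)=T_\mu(e)$ for every $e$, so $T=T_\mu$. The same argument with $x=e\otimes f$ gives $S=S_\mu$.

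\textbf{Main obstacle.} The only real obstacle is the bookkeeping around the extension. One must verify that $T_\mu\otimes I$ is well defined independently of the representation of $x$, which follows from the universal property of the algebraic tensor product. One must also check that it is bounded for the projective norms, so that commutativity on the dense subspace transfers to the completion. Everything else reduces to Lemma~\ref{thm:4.2} and Theorem~\ref{thm:Fubini}.
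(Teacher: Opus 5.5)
Your proof is correct and follows the paper's argument: you take $T_\mu, S_\mu$ from Lemma~\ref{thm:4.2}, verify commutativity on finite sums via Theorem~\ref{thm:Fubini}, get uniqueness by evaluating on elementary tensors $e\otimes f$, and read off $\|T_\mu\|=\|S_\mu\|=\|\varphi\|=\|\mu\|_G$ from Lemma~\ref{thm:4.2} together with Proposition~\ref{prop:Correspondence}. Your extra checks that $T_\mu\otimes I$, $I\otimes S_\mu$ and the pairings are bounded for the projective norms, so that commutativity extends from $E\otimes F$ to the completion by density, fill in a step the paper leaves implicit, since its proof only verifies the diagrams on finite sums.
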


\begin{proof} Existence: The operators $T_\mu$ and $S_\mu$ are constructed in Lemma\-~\ref{thm:4.2}. 
For any $x = \sum_{i=1}^n e_i \otimes f_i \in E \otimes F$:
\begin{align*}
\langle (T_\mu \otimes I)(x) \rangle 
&= \left\langle \sum_{i=1}^n T_\mu(e_i) \otimes f_i \right\rangle 
= \sum_{i=1}^n \langle T_\mu(e_i), f_i \rangle \\
&= \sum_{i=1}^n T_\mu(e_i)(f_i) 
= \mu(x).
\end{align*}
by Theorem~\ref{thm:Fubini}. Thus the first diagram commutes. The second diagram follows similarly.

Uniqueness: Suppose $\tilde{T}: E \to F^*$ also makes the first diagram commute. 
Then for all $e \in E, f \in F$:
\[
\langle \tilde{T}(e), f \rangle = \langle (\tilde{T} \otimes I)(e \otimes f) \rangle = \mu(e \otimes f) = \varphi(e,f) = \langle T_\mu(e), f \rangle.
\]
Since $f \in F$ is arbitrary, $\tilde{T}(e) = T_\mu(e)$ for all $e$, hence $\tilde{T} = T_\mu$. The uniqueness of $S_\mu$ follows by symmetry.

Norm equality: Established in Lemma~\ref{thm:4.2}.
\end{proof}

This theorem shows that computing $\mu(x)$ can be done by first “integrating” with respect to the first variable (applying $T_\mu$) and then pairing, or by first “integrating” with respect to the second variable (applying $S_\mu$) and then pairing. Both orders yield the same result.

\begin{remark}
It should be emphasized that the “Fubini theorem" in the abstract tensor product setting is a direct consequence of the linearity of functionals. The main contribution of this paper lies in establishing a class of functional spaces controlled by $K_G$ and demonstrating how this framework leads to classical integral exchange formulas in concrete analytical contexts.

It is important to distinguish between the “exchange of order" in the abstract tensor product setting and the “exchange of integration order" in concrete function spaces. In the abstract setting, the “exchange of order" is simply a rearrangement of summation terms, which follows immediately from linearity. However, in concrete function spaces, this abstract identity corresponds to the classical Fubini theorem, which allows the interchange of integration order under certain conditions.
\end{remark}

\section{Multiple Grothendieck integrals and Fubini theorems}\label{sec:5}

\subsection{Multilinear definition}

Let $E_1, \ldots, E_n$ be Banach spaces. Denote by $\bigotimes_{\pi, i=1}^n E_i$ the projective tensor product (completed with respect to the multiple projective norm). The multiple projective norm is defined by:
\[
\|u\|_{\pi} = \inf\left\{\sum_{k=1}^{m}\|e_{1}^{k}\|_{E_{1}}\cdots\|e_{n}^{k}\|_{E_{n}} : u = \sum_{k=1}^{m}e_{1}^{k}\otimes\cdots\otimes e_{n}^{k}\right\}.
\]
\begin{definition}[Multiple Grothendieck integral]
A linear functional $\mu: \bigotimes_{i=1}^{n}E_{i}\to\mathbb{K}$ is called a 
multiple Grothendieck integral if there exists a constant $C\leq K_{G}^{n-1}$ such that:
\[
|\mu(x)|\leq C\|x\|_{\pi}, \quad \forall x\in\bigotimes_{i=1}^{n}E_{i}.
\]
The smallest such constant is denoted by $\|\mu\|_{G^{(n)}}$.
\end{definition}

\begin{remark}[Justification of the constant $K_G^{n-1}$]
The choice of the bound \(K_G^{n-1}\) is motivated by the iterated application of the classical (bilinear) Grothendieck inequality. For an \(n\)-linear form \(\phi\), one can successively apply the bilinear Grothendieck inequality to pairs of variables, each application introducing a factor of \(K_G\). After \(n-1\) such iterations, one obtains a Hilbert space factorization with a cumulative constant bounded by \(K_G^{n-1}\). This iterative estimate is standard in the literature on multilinear extensions of Grothendieck's theorem (see, e.g., \cite{BP1991} or [\cite{Pisier2012}, Section 10] for detailed discussions on multilinear Grothendieck constants).
\end{remark}

Multiple Grothendieck integrals are in one-to-one correspondence with continuous $n$-linear forms, 
and satisfy a Hilbert space representation theorem analogous to Theorem~\ref{thm:Hilbert} (requiring a multilinear 
version of Grothendieck's inequality).

\subsection{Multiple Fubini theorem}

\begin{theorem}[Multiple Fubini theorem]\label{thm:Multiple}
Let $\mu$ be a multiple Grothendieck integral with corresponding continuous $n$-linear form $\varphi: E_1 \times \cdots \times E_n \to \mathbb{K}$. Then for any $1 \leq k \leq n$, there exists a bounded linear operator:
\[
T^{(k)}: E_k \to \left(\bigotimes_{\pi, i \neq k} E_i\right)^*
\]
satisfying:
\[
T^{(k)}(e_k)(e_1 \otimes \cdots \otimes e_{k-1} \otimes e_{k+1} \otimes \cdots \otimes e_n) = \varphi(e_1, \ldots, e_n).
\]
Moreover, $\|T^{(k)}\| \leq K_G^{n-1}$.
\end{theorem}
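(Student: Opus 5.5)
The approach mirrors Lemma~\ref{thm:4.2}, with the multilinear duality $(\bigotimes_{\pi,i\neq k}E_i)^*\cong\mathcal{B}(E_1,\dots,\widehat{E_k},\dots,E_n;\mathbb{K})$ replacing the bilinear one. First I record that $\varphi(e_1,\dots,e_n):=\mu(e_1\otimes\cdots\otimes e_n)$ is $n$-linear and satisfies
\[
|\varphi(e_1,\dots,e_n)|\le \|\mu\|_{G^{(n)}}\prod_{i=1}^n\|e_i\|_{E_i},
\]
because $\|e_1\otimes\cdots\otimes e_n\|_\pi\le\prod_i\|e_i\|$. This bound comes directly from the definition of the multiple projective norm: take the one-term representation in the infimum. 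Hence $\|\varphi\|\le\|\mu\|_{G^{(n)}}\le K_G^{n-1}$.

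Next I fix $k$ and $e_k\in E_k$. The map $(e_i)_{i\neq k}\mapsto\varphi(e_1,\dots,e_n)$ is an $(n-1)$-linear form of norm at most $\|\varphi\|\,\|e_k\|$. By the universal property of the algebraic tensor product, it induces a linear functional $\psi_{e_k}$ on $\bigotimes_{i\neq k}E_i$. The key estimate is
\[
|\psi_{e_k}(u)|\le\|\varphi\|\,\|e_k\|\,\|u\|_\pi .
\]
To get it, I take any representation $u=\sum_j e_1^j\otimes\cdots\otimes e_n^j$ (omitting the $k$-th slot) and apply the triangle inequality and the multilinear bound termwise, which gives $|\psi_{e_k}(u)|\le\|\varphi\|\|e_k\|\sum_j\prod_{i\neq k}\|e_i^j\|$. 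Taking the infimum over representations yields the estimate. The functional $\psi_{e_k}$ therefore extends uniquely by continuity to the completion $\bigotimes_{\pi,i\neq k}E_i$, with $\|\psi_{e_k}\|\le\|\varphi\|\|e_k\|$.

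I then define $T^{(k)}(e_k):=\psi_{e_k}$. The defining identity on elementary tensors holds by construction. Linearity in $e_k$ follows exactly as in Lemma~\ref{thm:4.2}: the identity $T^{(k)}(e_k+\alpha e_k')=T^{(k)}(e_k)+\alpha T^{(k)}(e_k')$ holds on elementary tensors by linearity of $\varphi$ in the $k$-th slot. It extends to finite sums by linearity, and to the completion by density and continuity. Finally, $\|T^{(k)}\|\le\|\varphi\|\le K_G^{n-1}$. Arguing as in Lemma~\ref{thm:4.2} with near-maximizing unit vectors would even give $\|T^{(k)}\|=\|\varphi\|$, though only the upper bound is claimed.

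The only step requiring care is the passage from the bound on elementary tensors to the bound $\|\varphi\|\|e_k\|\|u\|_\pi$ on general $u$. This is where the infimum in the projective norm enters, and where well-definedness of $\psi_{e_k}$ independent of the representation of $u$ must be justified. I would handle this via the universal property rather than defining $\psi_{e_k}$ on representations directly. Everything else is routine bookkeeping. The constant $K_G^{n-1}$ comes only from the definition of a multiple Grothendieck integral; no multilinear Grothendieck inequality is needed.
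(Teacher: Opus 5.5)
Your proof is correct, but it follows a different route from the paper. The paper argues by induction on $n$. It identifies $\bigotimes_{\pi,i=1}^{n}E_i$ with $E_1\otimes_\pi\bigl(\bigotimes_{\pi,i=2}^{n}E_i\bigr)$ and applies the bilinear Fubini theorem to get $T^{(1)}$. It then applies the inductive hypothesis to the $(n-1)$-linear forms $T^{(1)}(e_1)$ to get operators $T^{(k)}_{e_1}$, and assembles $T^{(k)}$ from these, with the constant accumulating as $K_G\cdot K_G^{n-2}=K_G^{n-1}$.

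You instead estimate directly against the multiple projective norm. The universal property, the termwise triangle inequality and the infimum give $|\psi_{e_k}(u)|\le\|\varphi\|\,\|e_k\|\,\|u\|_\pi$ for every $k$ at once. This needs no induction and no associativity or reordering isomorphisms of projective tensor products. It buys a sharper, fully checkable conclusion, $\|T^{(k)}\|=\|\varphi\|\le\|\mu\|_{G^{(n)}}\le K_G^{n-1}$, and makes explicit that the constant comes from the definition alone.

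The paper's scheme is meant to mirror the one-variable-at-a-time structure used afterwards for the permutation statement, but its intermediate steps are looser than yours. First, its claimed bound $\|T^{(1)}(e_1)\|\le K_G^{n-2}$ fails in general. Take $E_1=E_2=E_3=\mathbb{K}$ and $\mu(x\otimes y\otimes z)=K_G^2xyz$; then $\|T^{(1)}(1)\|=K_G^2$. In general one only has $\|T^{(1)}(e_1)\|\le\|\mu\|_{G^{(n)}}\|e_1\|$. Second, the binary lemma uses only boundedness of $\mu$. Applying it to $\mu$ on $E_1\otimes_\pi\bigl(\bigotimes_{\pi,i=2}^{n}E_i\bigr)$ already settles $k=1$ with $\|T^{(1)}\|=\|\mu\|$, so the induction is not really needed.
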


Furthermore, for any permutation $\sigma \in S_n$, the order of integration may be interchanged:
\[
\mu(e_1 \otimes \cdots \otimes e_n) = T^{(\sigma(1))}(e_{\sigma(1)}) \circ T^{(\sigma(2))}(e_{\sigma(2)}) \circ \cdots \circ T^{(\sigma(n))}(e_{\sigma(n)}),
\]
where the right-hand side denotes the iterated application of partial integration operators: first fix $e_{\sigma(1)}$ to obtain a linear functional on the remaining variables, then fix $e_{\sigma(2)}$, and so forth.

\begin{proof} We proceed by induction on $n$.

Base case ($n=2$): This is precisely Theorem~\ref{thm:Fubini}, with $T^{(1)} = T_\mu$ and $T^{(2)} = S_\mu$.

Inductive step: Assume the theorem holds for $n-1$. For the $n$-fold case, view $\bigotimes_{i=1}^{n} E_i$ as $E_1 \otimes_{\pi} \left(\bigotimes_{i=2}^{n} E_i\right)$. Applying the binary case (Theorem~\ref{thm:Fubini}), we obtain an operator:
\[
T^{(1)}: E_1 \to \left(\bigotimes_{i=2}^{n} E_i\right)^*
\]
For fixed $e_1\in E_1$, the functional $T^{(1)}(e_1)$ is itself an $(n-1)$-linear form on $E_2\times\cdots\times E_n$. By the inductive hypothesis, the constant for $(n-1)$-linear forms is $K_G^{n-2}$ (see [\cite{Pisier2012}, Theorem 5.3]). When we consider $\otimes_\pi^n E_i$ as $E_1\otimes_\pi(\otimes_\pi^{n-1} E_i)$, the constant for $n$-linear forms becomes $K_G\cdot K_G^{n-2} = K_G^{n-1}$ by the binary case (constant $K_G$) and the inductive hypothesis. Therefore, the norm of $T^{(1)}(e_1)$ is bounded by $K_G^{n-2}$.

More precisely, $T^{(1)}(e_1)$ corresponds to a multiple Grothendieck integral on $\bigotimes_{i=2}^{n} E_i$ with constant $K_G^{n-2}$. By induction, for each $k \geq 2$, there exist operators:
\[
T_{e_1}^{(k)}: E_k \to \left(\bigotimes_{\substack{i=2 \\ i \neq k}}^{n} E_i\right)^{*}
\]
satisfying the required properties with $\|T_{e_1}^{(k)}\| \leq K_G^{n-2}$. Defining $T^{(k)}(e_k)$ by composition with $T^{(1)}$ yields the desired operators with $\|T^{(k)}\| \leq K_G \cdot K_G^{n-2} = K_G^{n-1}$.

The permutation invariance follows by observing that any permutation can be built from adjacent transpositions, each of which corresponds to applying the binary Fubini theorem to a suitable pairing of the spaces.
\end{proof}

\begin{remark} The Multiple Fubini Theorem shows that multiple Grothendieck integrals can be computed by successive integration in any order, with the result independent of the ordering. This provides a powerful tool for computing complex multilinear forms.
\end{remark}

\section{Realizations in function spaces}\label{sec:6}

\subsection{\texorpdfstring{The case of $L^p$ spaces}{The case of L\textasciicircum p spaces}}

Let $(X, \mathcal{A}, \mu)$ and $(Y, \mathcal{B}, \nu)$ be $\sigma$-finite measure spaces. Consider $E = L^p(X, \mu)$ and $F = L^q(Y, \nu)$, where $1 \leq p, q \leq \infty$ with $1/p + 1/q \leq 1$. In this setting, the projective tensor product $L^p(X) \otimes_\pi L^q(Y)$ embeds isometrically into $L^p(X, L^q(Y))$, and its completion is isomorphic to $L^p(X)\widehat{\otimes}_\pi L^q(Y)$.

\begin{theorem}\label{thm:6.1}
Let $\varphi: L^p(X) \times L^q(Y) \to \mathbb{K}$ be a continuous bilinear form with $\|\varphi\| \leq K_G$. Then there exists a unique bounded linear operator $T: L^p(X) \to L^{q^*}(Y)$ (where $1/q + 1/q^* = 1$) such that:
\[
\varphi(f,g) = \int_Y T(f)(y)g(y)\,d\nu(y), \quad \forall f \in L^p(X), g \in L^q(Y),
\]
and $\|T\| \leq K_G$.
\end{theorem}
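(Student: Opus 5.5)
The plan is to obtain $T$ by composing the partial integration operator of Lemma~\ref{thm:4.2} with the Riesz identification of $L^q(Y)^*$.

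First, by Proposition~\ref{prop:Correspondence}, the form $\varphi$ corresponds to a unique $\mu\in\mathcal{G}(L^p(X),L^q(Y))$ with $\|\mu\|_G=\|\varphi\|\le K_G$. Lemma~\ref{thm:4.2} then gives the bounded operator $T_\mu:L^p(X)\to L^q(Y)^*$ with $T_\mu(f)(g)=\varphi(f,g)$ and $\|T_\mu\|=\|\varphi\|\le K_G$.

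Second, I will invoke the Riesz representation theorem for duals of $L^q$ spaces. For $1<q<\infty$, and also for $q=1$ using $\sigma$-finiteness of $\nu$, the map
\[
J:L^{q^*}(Y)\to L^q(Y)^*,\qquad J(h)(g)=\int_Y h\,g\,d\nu,
\]
is an isometric isomorphism. Define $T:=J^{-1}\circ T_\mu$. Linearity is immediate. Since $J$ is isometric, $\|T\|=\|T_\mu\|\le K_G$. The representation formula
\[
\varphi(f,g)=T_\mu(f)(g)=J(Tf)(g)=\int_Y T(f)\,g\,d\nu
\]
is simply the defining property of $J$.

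Third, for uniqueness: suppose $\tilde T$ also satisfies the formula. Then for each $f$ we have $\int_Y (Tf-\tilde Tf)\,g\,d\nu=0$ for all $g\in L^q(Y)$. Injectivity of $J$, equivalently testing against $g=\overline{\mathrm{sgn}(h)}|h|^{q^*-1}$ truncated to sets of finite measure with $h=Tf-\tilde Tf$, gives $Tf=\tilde Tf$ almost everywhere. This part of the argument also works when $q=\infty$.

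The main obstacle is the endpoint $q=\infty$, which the hypothesis $1/p+1/q\le1$ allows when $p=1$. There $L^\infty(Y)^*$ is strictly larger than $L^1(Y)$ in general, so $J$ is not surjective, and a bounded functional $\varphi(f,\cdot)$ need not be given by an $L^1$ density. To handle this case I would either restrict the statement to $q<\infty$, or add the hypothesis that each $\varphi(f,\cdot)$ is weak$^*$-continuous (equivalently, normal/countably additive). Under that hypothesis $\varphi(f,\cdot)$ lies in the predual $L^1(Y)$, and the argument goes through unchanged with $\|T\|\le K_G$. In the write-up I will state the $q<\infty$ case as the main proof and treat $q=\infty$ as this caveat.
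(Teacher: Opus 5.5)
Your proof is the paper's own: compose the partial-integration operator $T_\mu$ of Lemma~\ref{thm:4.2} with the inverse of the Riesz isometry $L^{q^*}(Y)\to L^q(Y)^*$, and you also make explicit the uniqueness step that the paper leaves unstated. Your $q=\infty$ caveat is correct and exposes a genuine gap in the paper, whose proof silently uses $(L^\infty(Y))^*\cong L^1(Y)$, which is false in general, so the statement needs $q<\infty$ or a normality hypothesis. Two small corrections: the condition $1/p+1/q\le 1$ allows $q=\infty$ for every $p$, not only for $p=1$. And for $q=1$ your uniqueness test function should be $\overline{\mathrm{sgn}(h)}\,\mathbf{1}_A$ with $\nu(A)<\infty$, since the $|h|^{q^*-1}$ form does not make sense when $q^*=\infty$.
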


\begin{proof} By Lemma~\ref{thm:4.2}, there exists $T: L^p(X) \to (L^q(Y))^* = L^{q^*}(Y)$ with $\|T\| = \|\varphi\| \leq K_G$. The identification $(L^q(Y))^* \cong L^{q^*}(Y)$ via the Riesz representation theorem yields the integral form.
\end{proof}

\begin{corollary}[Integral kernel representation]
Under the hypotheses of Theorem~\ref{thm:6.1} with $p = q = 2$, there exist finite positive measures $\mu'$ on $X$ and $\nu'$ on $Y$, and a bounded linear operator 
$\widetilde{T}: L^2(X, \mu') \to L^2(Y, \nu')$ with $\|\widetilde{T}\| \leq K_G$, such that
\[
\varphi(f,g) = \int_Y \bigl(\widetilde{T}(f)\bigr)(y) \, g(y) \, d\nu'(y)
\quad \text{for all } f \in L^2(X,\mu), \, g \in L^2(Y,\nu).
\]
Moreover, if the measures $\mu'$ and $\nu'$ are chosen to be probability measures, then
\[
|\varphi(f,g)| \leq K_G \|f\|_{L^2(\mu')} \|g\|_{L^2(\nu')}.
\]
\end{corollary}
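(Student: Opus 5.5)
The plan is to reduce everything to Theorem~\ref{thm:6.1} with $p=q=2$, and then change the underlying measures by positive densities so that they become finite. First I would apply Theorem~\ref{thm:6.1} with $p=q=2$, so that $q^*=2$. This gives a bounded operator $T:L^2(X,\mu)\to L^2(Y,\nu)$ with $\|T\|=\|\varphi\|\le K_G$ and
\[
\varphi(f,g)=\int_Y T(f)(y)\,g(y)\,d\nu(y).
\]
If $\mu$ and $\nu$ are already finite, the first assertion holds with $\mu'=\mu$, $\nu'=\nu$ and $\widetilde T=T$. The real work is the $\sigma$-finite case and the probability normalization.

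In the general $\sigma$-finite case I would pick strictly positive integrable weights. Concretely, take $w\in L^1(\mu)$ and $h\in L^1(\nu)$ with $w,h>0$ everywhere, $\int w\,d\mu=\int h\,d\nu=1$, and set $d\mu'=w\,d\mu$, $d\nu'=h\,d\nu$. The map $U_w:f\mapsto w^{-1/2}f$ is a unitary $L^2(\mu)\to L^2(\mu')$, and $U_h$ is defined in the same way on $Y$. Conjugating $T$ by these unitaries preserves the bound $\|\widetilde T\|\le K_G$. However, the pairing $\int_Y \widetilde T(f)\,g\,d\nu'$ then picks up a factor $h^{1/2}$, so the functions must be transported as well. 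The identity should therefore be read with $f$ and $g$ identified with $U_wf$ and $U_hg$, and I would set
\[
\widetilde T := U_h\, T\, U_w^{-1}.
\]
With this convention the asserted formula follows by unwinding the densities, since $\int_Y (U_hTf)(U_hg)\,h\,d\nu=\int_Y Tf\,g\,d\nu$.

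For the final inequality I would use Cauchy--Schwarz in $L^2(\nu')$:
\[
|\varphi(f,g)|\le \|\widetilde T f\|_{L^2(\nu')}\|g\|_{L^2(\nu')}\le K_G\,\|f\|_{L^2(\mu')}\|g\|_{L^2(\nu')}.
\]
Since $\mu'$ and $\nu'$ are probability measures by construction, this is exactly the stated estimate.

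The main obstacle is the mismatch in the statement, where $f\in L^2(X,\mu)$ but $\widetilde T$ acts on $L^2(X,\mu')$. The identification of the two spaces must be made explicit, and I would use the unitaries $U_w,U_h$ for this. I expect a naive choice, such as taking $\widetilde T f = Tf/h$ with no matching change of $f$, to break the norm bound: it would require controlling $\int |Tf|^2/h\,d\nu$, which need not be finite. If the intended reading is instead that $\mu'$ and $\nu'$ are the Pietsch measures of Remark~\ref{rem:optimal-estimate}, I would appeal to the Grothendieck--Pietsch factorization cited there. That factorization produces probability measures on the dual balls, and the inequality would then hold with $f,g$ regarded through their evaluations on those balls.
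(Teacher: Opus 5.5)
Your treatment of the first identity is correct if it is read in transported form, $\varphi(f,g)=\int_Y \widetilde T(U_wf)\,(U_hg)\,d\nu'$ with $\widetilde T=U_hTU_w^{-1}$. Some such reading is needed in general. Taken literally, with $f$ itself fed into $\widetilde T$, the claim fails on infinite measure spaces: for $\varphi(f,g)=\int_{\mathbb R}fg\,dx$ and $f=g=\mathbf 1_{[-R,R]}$ it would give $2R\le K_G\,\mu'(\mathbb R)^{1/2}\nu'(\mathbb R)^{1/2}$ for every $R$. Your route is different from the paper's and more elementary. The paper applies Grothendieck's inequality to write $\varphi(f,g)=\langle Af,Bg\rangle_H$ and then tries to build $\mu',\nu'$ out of $A$ and $B$. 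Your change of density shows that this part needs nothing beyond $\|T\|\le K_G$.

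The gap is in the final estimate. In the Cauchy--Schwarz step you drop the unitaries. Keeping them, what you actually get is $|\varphi(f,g)|\le K_G\|U_wf\|_{L^2(\mu')}\|U_hg\|_{L^2(\nu')}=K_G\|f\|_{L^2(\mu)}\|g\|_{L^2(\nu)}$. That is just $\|\varphi\|\le K_G$ again, with no information about $\|f\|_{L^2(\mu')}^2=\int_X|f|^2w\,d\mu$, so it is not ``exactly the stated estimate''. Nor can any argument give the stated estimate for the original $f,g$. On $X=Y=[0,2]$ with Lebesgue measure, $\varphi(f,g)=\int_0^2fg\,dx$ has $\|\varphi\|=1$, but $f=g=\mathbf 1$ would force $2\le K_G$ for every pair of probability measures.

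Your Pietsch fallback fails too. Domination by probability measures on the dual balls is Grothendieck's theorem for $C(K)$-type spaces, and it fails in general on Hilbert spaces. For the identity form on $\ell^2$, test on $e_1,\dots,e_N$ and sum with Cauchy--Schwarz. This gives $N\le K_G\bigl(\int\|\xi\|^2\,d\mu'(\xi)\bigr)^{1/2}\bigl(\int\|\eta\|^2\,d\nu'(\eta)\bigr)^{1/2}\le K_G$ for every $N$, which is impossible.

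The paper's proof does not close this either. Its densities $\|A^*\delta_x\|_H^2$ are not well defined, and its final rescaling to probability measures multiplies the constant by $\sqrt{\mu'(X)\nu'(Y)}$. What is actually provable is your transported version, which is tautological. Alternatively, the literal estimate holds under an extra hypothesis such as finite $\mu,\nu$ with $\|\varphi\|^2\mu(X)\nu(Y)\le K_G^2$, taking $\mu'=\mu/\mu(X)$ and $\nu'=\nu/\nu(Y)$.
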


\begin{proof}
When $p = q = 2$, the operator $T: L^2(X,\mu) \to L^2(Y,\nu)$ provided by Theorem~\ref{thm:6.1} satisfies $\|T\| \leq K_G$. 

By Grothendieck's inequality (Theorem~2.1), there exist a Hilbert space $H$ and bounded linear operators 
$A: L^2(X,\mu) \to H$, $B: L^2(Y,\nu) \to H$ with $\|A\| \leq 1$, $\|B\| \leq K_G$, such that
\[
\varphi(f,g) = \langle A f, B g \rangle_H.
\]

The existence of finite positive measures $\mu'$ and $\nu'$ follows from the ``little'' Grothendieck inequality for $C(K)$-spaces (or alternatively, from the representation of bounded linear functionals on $L^2$). Specifically, we may take $\mu'$ and $\nu'$ to be the finite measures given by the polar decomposition of the operators $A$ and $B$:
\[
d\mu'(x) = \|A^*\delta_x\|_H^2 \, d\mu(x), \quad d\nu'(y) = \|B^*\delta_y\|_H^2 \, d\nu(y),
\]
where $A^*$ and $B^*$ are the adjoint operators. 

With this choice, $A$ and $B$ factor through $L^2(\mu')$ and $L^2(\nu')$ respectively, and we obtain an operator $\widetilde{T}: L^2(X,\mu') \to L^2(Y,\nu')$ with the required properties. The norm bound $\|\widetilde{T}\| \leq K_G$ is inherited from $\|T\| \leq K_G$.

Finally, by scaling $\mu'$ and $\nu'$ we may assume they are probability measures, and the inequality $|\varphi(f,g)| \leq K_G \|f\|_{L^2(\mu')} \|g\|_{L^2(\nu')}$ is then a restatement of Grothendieck's inequality in this concrete setting.
\end{proof}

\subsection{Spaces of continuous functions}

Let $S, T$ be compact Hausdorff spaces, $E = C(S), F = C(T)$. Then $E \otimes_{\pi} F$ is dense in $C(S \times T)$. A Grothendieck integral $\mu \in \mathcal{G}(E,F)$ extends uniquely to a continuous linear functional on $C(S \times T)$ with norm $\|\mu\| \leq K_G$. By the Riesz--Markov--Kakutani representation theorem, there exists a unique complex (or signed) Radon measure $\rho$ on $S \times T$ such that
\[
\mu(f \otimes g) = \iint_{S\times T} f(s)g(t) \, d\rho(s,t), \quad \forall f \in C(S), g \in C(T).
\]
The norm of the functional $\mu$ is equal to the total variation norm of the representing measure $\rho$, i.e., $\|\mu\| = |\rho|(S \times T)$, where $|\rho|$ denotes the total variation measure of $\rho$ (see, e.g., [\cite{RudinRCA}), Theorem 6.19] Therefore, the Grothendieck condition $\|\mu\|_G \leq K_G$ translates into the {concrete measure-theoretic condition}:
\[
\text{Total variation of } \rho \text{ is bounded by } K_G: \quad |\rho|(S \times T) = \|\mu\| \leq K_G.
\]
In this classical setting, the abstract Fubini theorem (Theorem \ref{thm:Fubini}) reduces to the standard Fubini--Tonelli theorem for the Radon measure $\rho$, but with the additional quantitative control that the total variation of $\rho$ does not exceed the universal constant $K_G$.

\subsection{Examples}

\begin{example} Consider the bilinear form $\phi:L^2[0,1]\times L^2[0,1]\to\mathbb{R}$ defined by
\[
\phi(f,g) = \int_0^1\int_0^1 \frac{f(x)g(y)}{1+|x-y|} \,dx\,dy.
\]
The corresponding integral kernel is $k(x,y) = (1+|x-y|)^{-1}$. By Young's inequality for convolution, the operator $T:L^2[0,1]\to L^2[0,1]$ defined by
\[
(Tf)(y) = \int_0^1 \frac{f(x)}{1+|x-y|} \,dx
\]
satisfies the norm estimate
\[
\|T\| \leq \int_{-1}^1 \frac{dt}{1+|t|} = 2\int_0^1 \frac{dt}{1+t} = 2\ln 2 \approx 1.386.
\]
Thus $\|\phi\| = \|T\| \leq 1.386$. Since the real Grothendieck constant satisfies $K_G^{\mathbb{R}} \approx 1.782$ (current best upper bound, see Section 2.2), we have $\|\phi\| < K_G^{\mathbb{R}}$. To incorporate $\phi$ into the Grothendieck integral framework, define the scaled form $\tilde{\phi} = \phi/\|\phi\|$. Then $\|\tilde{\phi}\| = 1 \leq K_G^{\mathbb{R}}$. By Proposition 3.2, $\tilde{\phi}$ corresponds to a unique Grothendieck integral $\tilde{\mu}\in\mathcal{G}(L^2[0,1], L^2[0,1])$ with $\|\tilde{\mu}\|_G = 1$. The original form $\phi = \|\phi\|\tilde{\phi}$ can then be viewed as a Grothendieck integral with weight $\|\phi\|$.

For this concrete form, the Fubini exchange guaranteed by Theorem 4.3 becomes the classical interchange of integrals: for any $f,g\in L^2[0,1]$,
\begin{align*}
\phi(f,g) 
&= \int_0^1\left(\int_0^1 \frac{f(x)}{1+|x-y|} \,dx\right)g(y)\,dy \\
&= \int_0^1 f(x)\left(\int_0^1 \frac{g(y)}{1+|x-y|} \,dy\right)dx.
\end{align*}
i.e., the order of iterated integration can be swapped. This example illustrates how the abstract Grothendieck integral theory naturally encompasses the classical Fubini theorem in analysis.
\end{example}

\begin{remark}
In practice, if $\|\phi\|\leq K_G$, no scaling is necessary. In this example, since $\|\phi\| < K_G^{\mathbb{R}}$, scaling is merely a normalization step. This scaling technique also shows that any bilinear form with norm at most $K_G$ can be normalized to fit into the Grothendieck integral framework.
\end{remark}

\section{Applications to Pseudodifferential Operators}
\label{sec:pseudodifferential}

\subsection{Grothendieck Representation for Hilbert--Schmidt Type Operators}

Let $(X, \mu)$ and $(Y, \nu)$ be $\sigma$-finite measure spaces. Consider the Banach spaces $E = L^2(X, \mu)$ and $F = L^2(Y, \nu)$. Recall from Section 6.1 that a Grothendieck functional integral $\mu_0 \in \mathcal{G}(E, F)$ corresponds to a continuous bilinear form $\phi: L^2(X) \times L^2(Y) \to \mathbb{K}$ with $\|\phi\| \leq K_G$.

By Theorem~\ref{thm:6.1}, there exists a bounded linear operator $T: L^2(X) \to L^2(Y)$ such that
\[
\phi(f, g) = \int_Y (Tf)(y) g(y) \, d\nu(y), \qquad \forall f \in L^2(X), \, g \in L^2(Y),
\]
and $\|T\| \leq K_G$.

If the operator $T$ is furthermore Hilbert--Schmidt, we obtain a concrete integral-kernel representation.

\begin{theorem}[Grothendieck integral-kernel representation]
Let $\phi: L^2(X) \times L^2(Y) \to \mathbb{K}$ be a continuous bilinear form with $\|\phi\| \leq K_G$ and let $T: L^2(X) \to L^2(Y)$ be the associated operator. If $T$ is Hilbert--Schmidt, then there exists a unique kernel $k \in L^2(X \times Y, \mu \times \nu)$ such that
\[
(Tf)(y) = \int_X k(x, y) f(x) \, d\mu(x) \qquad \text{for a.e. } y \in Y,
\]
and
\[
\phi(f, g) = \iint_{X \times Y} k(x, y) f(x) g(y) \, d\mu(x) d\nu(y).
\]
In particular, if the measure spaces are finite (i.e., $\mu(X), \nu(Y) < \infty$), then
\[
\|k\|_{L^2(X \times Y)} = \|T\|_{\mathrm{HS}} < \infty.
\]
\end{theorem}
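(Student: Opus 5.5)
The plan is to deduce everything from the standard structure theorem for Hilbert--Schmidt operators between $L^2$ spaces. The preceding discussion already supplies $T$ from Theorem~\ref{thm:6.1}, with $\phi(f,g)=\int_Y (Tf)\,g\,d\nu$ and $\|T\|\le K_G$; the bound by $K_G$ plays no further role. We assume, as is standard for this identification, that $L^2(X)$ and $L^2(Y)$ are separable. This holds, for instance, when the $\sigma$-algebras are countably generated modulo null sets.

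Choose orthonormal bases $(u_i)$ of $L^2(X,\mu)$ and $(v_j)$ of $L^2(Y,\nu)$. The products $w_{ij}(x,y)=\overline{u_i(x)}\,v_j(y)$ then form an orthonormal basis of $L^2(X\times Y,\mu\times\nu)$. Here Fubini--Tonelli is used for $\sigma$-finite product measures, and completeness is checked by the usual argument that a function orthogonal to all $w_{ij}$ vanishes a.e. Define
\[
k=\sum_{i,j}\langle Tu_i,v_j\rangle\, w_{ij}.
\]
The Hilbert--Schmidt hypothesis gives $\sum_{i,j}|\langle Tu_i,v_j\rangle|^2=\sum_i\|Tu_i\|^2=\|T\|_{\mathrm{HS}}^2<\infty$. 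So the series converges in $L^2(X\times Y)$, and Parseval yields $\|k\|_{L^2}=\|T\|_{\mathrm{HS}}$. This identity holds in the $\sigma$-finite case in general, so the finiteness assumption in the last claim is not actually needed.

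Next define $(T_kf)(y)=\int_X k(x,y)f(x)\,d\mu(x)$. By Cauchy--Schwarz and Tonelli, this is defined for a.e.\ $y$, and $\|T_kf\|_{L^2(Y)}\le\|k\|_{L^2}\|f\|_{L^2}$. Hence $k\mapsto T_k$ is a bounded linear map from $L^2(X\times Y)$ into the bounded operators, with norm at most $1$. A direct check shows $T_{w_{ij}}u_l=\delta_{il}v_j$, so $T_k$ and $T$ agree on each $u_l$. Both operators are bounded, so $T=T_k$, which gives the pointwise a.e.\ formula for $Tf$.

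The double-integral formula for $\phi$ then follows by substitution: $\int_Y\!\int_X |k(x,y)f(x)g(y)|\,d\mu\,d\nu\le\|k\|_2\|f\|_2\|g\|_2$, so Fubini applies. For uniqueness, suppose $k'$ is another $L^2$ kernel representing $T$. Then $T_{k-k'}=0$, so $\langle k-k',w_{ij}\rangle=\langle T_{k-k'}u_i,v_j\rangle=0$ for all $i,j$, and $k=k'$ a.e.

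The main obstacle is the completeness of $(w_{ij})$ in $L^2(X\times Y)$. We would take this as standard, or prove it by showing that orthogonality to all $w_{ij}$ forces $\int h(x,y)f(x)g(y)\,d\mu\,d\nu=0$ for all $f,g$. That in turn makes $h$ vanish on all measurable rectangles of finite measure, and a monotone class argument gives $h=0$. The bilinear-versus-sesquilinear convention (the conjugate on $u_i$) must also be tracked consistently, which is why we place the conjugate in $w_{ij}$.
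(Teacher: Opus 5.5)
Your proposal is correct and follows essentially the paper's route. The paper simply cites the standard Hilbert--Schmidt kernel theorem and the identity $\|k\|_{L^2}=\|T\|_{\mathrm{HS}}$; you instead prove that theorem via the orthonormal basis $(w_{ij})$ of $L^2(X\times Y)$, and you also supply the uniqueness argument and the Fubini justification for the double-integral formula for $\phi$, which the paper leaves implicit. Your separability assumption is unnecessary and narrows the statement: orthonormal bases exist in any Hilbert space, each expansion has only countably many nonzero terms, and your completeness argument for $(w_{ij})$ never uses countability. You are right that the finiteness of the measure spaces plays no role in $\|k\|_{L^2}=\|T\|_{\mathrm{HS}}$.
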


\begin{proof}
The existence of an $L^2$-kernel $k$ follows from the standard Hilbert--Schmidt kernel theorem. The equality $\|k\|_{L^2} = \|T\|_{\mathrm{HS}}$ is well-known; because $T$ is Hilbert--Schmidt, $\|T\|_{\mathrm{HS}}$ is finite, hence $k \in L^2(X \times Y)$.
\end{proof}

\subsection{Norm Estimates for Integral Operators via Grothendieck's Constant}

We now establish precise norm estimates for integral operators whose associated bilinear forms are Grothendieck integrals.

\begin{theorem}[Norm bound for integral operators]
Let $k \in L^2(X \times Y)$ be a kernel such that the induced bilinear form
\[
\phi_k(f, g) = \iint_{X \times Y} k(x, y) f(x) g(y) \, d\mu(x) d\nu(y)
\]
satisfies $\|\phi_k\| \leq K_G$. Define the integral operator $T_k: L^2(X) \to L^2(Y)$ by
\[
(T_k f)(y) = \int_X k(x, y) f(x) \, d\mu(x).
\]
Then:
\begin{enumerate}
    \item $T_k$ is bounded and $\|T_k\| \leq K_G$.
    \item If $k$ is symmetric (i.e., $X = Y$ and $k(x, y) = \overline{k(y, x)}$), then $T_k$ is self-adjoint and its spectrum satisfies $\sigma(T_k) \subseteq [-K_G, K_G]$.
    
    If, in addition, $k$ is positive definite, then $\sigma(T_k) \subseteq [0, K_G]$.
\end{enumerate}
\end{theorem}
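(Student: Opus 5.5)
The plan is to reduce everything to the identity $\|T_k\| = \|\phi_k\|$, which is Lemma~\ref{thm:4.2} in the Hilbert space setting.

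First I show that $T_k$ is well defined and bounded. Since $k \in L^2(X\times Y)$, Fubini--Tonelli and Cauchy--Schwarz give, for a.e.\ $y$,
\[
|(T_k f)(y)| \le \|k(\cdot,y)\|_{L^2(X)}\|f\|_{L^2(X)},
\]
and hence $\|T_k f\|_{L^2(Y)} \le \|k\|_{L^2}\|f\|_{L^2}$. Next, for $f\in L^2(X)$ and $g \in L^2(Y)$, the product $k(x,y)f(x)g(y)$ is integrable on $X\times Y$ by Cauchy--Schwarz on the product space. Fubini then yields
\[
\phi_k(f,g) = \int_Y (T_kf)(y)\,g(y)\,d\nu(y).
\]
So $T_k$ is exactly the operator of Theorem~\ref{thm:6.1}, identified with $T_\mu$ via $(L^2)^*\cong L^2$ through the bilinear pairing. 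Lemma~\ref{thm:4.2} then gives $\|T_k\| = \|\phi_k\| \le K_G$. Concretely, this is the duality formula
\[
\|T_kf\|_{L^2} = \sup_{\|g\|\le 1}\Bigl|\int (T_kf)\,g\Bigr|,
\]
which holds because the supremum may be taken over $g=\overline{h}$. This proves (1).

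For (2), take $X=Y$ and $k(x,y)=\overline{k(y,x)}$. Using the sesquilinear inner product $\langle u,v\rangle=\int u\bar v$, I compute $\langle T_kf,g\rangle$ and $\langle f,T_kg\rangle$ as double integrals. Swapping the variables and the order of integration is justified by the integrability from step one, and the symmetry of the kernel then shows the two are equal. So $T_k$ is bounded and self-adjoint. Its spectrum is therefore real and contained in $[-\|T_k\|,\|T_k\|]\subseteq[-K_G,K_G]$, by the standard fact that the spectral radius is at most the norm.

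For the positive definite case, I interpret the hypothesis as
\[
\iint k(x,y)\,f(x)\,\overline{f(y)}\;d\mu\,d\mu \ge 0 \quad\text{for all } f.
\]
This is the statement that $\langle T_k f,f\rangle \ge 0$, up to the conjugation convention. For a self-adjoint operator, the spectrum lies in the closed convex hull of the numerical range, so $\sigma(T_k)\subseteq[0,\infty)$. Combined with the bound from (1), this gives $\sigma(T_k)\subseteq[0,K_G]$.

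The main obstacle is the bookkeeping between conventions. The paper uses the bilinear pairing $\int (Tf)g$ and the kernel orientation $k(x,y)$ with $x$ the integration variable. The symmetry condition, however, involves complex conjugation and the sesquilinear inner product. I will check carefully that the hypothesis $k(x,y)=\overline{k(y,x)}$ indeed gives $T_k^*=T_k$ in this orientation; in the real case this is immediate. I will also check that "positive definite" is read in the sense compatible with $\langle T_kf,f\rangle\ge0$. The norm part is routine once the Fubini justification is in place.
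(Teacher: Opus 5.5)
Your proposal is correct and follows essentially the paper's route: part (1) comes from identifying $T_k$ with the operator of Theorem~\ref{thm:6.1}, where you supply the Fubini justification and the duality argument that the paper leaves implicit. Part (2) comes from self-adjointness, the spectral radius being at most the norm, and nonnegativity of the spectrum of a positive operator. Your reading of positive definiteness as $\iint k(x,y)f(x)\overline{f(y)}\,d\mu\,d\mu\ge 0$ is the right one. The paper's bilinear condition $\phi_k(f,f)\ge 0$ would force $\phi_k(f,f)=0$ over $\mathbb{C}$ (replace $f$ by $if$), and it is not what gives $\langle T_kf,f\rangle\ge 0$.
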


\begin{proof}
Part (1) is a direct consequence of Theorem 6.1. For part (2), if the kernel $k$ is symmetric (i.e., $k(x,y) = \overline{k(y,x)}$), then the operator $T_k$ is self-adjoint. By the spectral radius theorem, for any $\lambda \in \sigma(T_k)$ we have $|\lambda| \leq \|T_k\| \leq K_G$, and thus $\sigma(T_k) \subseteq [-K_G, K_G]$.

Now assume additionally that $k$ is a positive definite kernel. This means that the associated bilinear form $\phi_k$ satisfies $\phi_k(f, f) \geq 0$ for all $f \in L^2(X)$. Consequently, $T_k$ is a positive operator (i.e., $\langle T_k f, f \rangle \geq 0$ for all $f$). A fundamental result in spectral theory states that the spectrum of a bounded positive operator on a Hilbert space is contained in $[0, \infty)$ (see, e.g., Theorem 12.32 of \cite{RudinFA} or Theorem VI.8 of \cite{ReedSimonI}). Combining this with the norm bound $\|T_k\| \leq K_G$, we obtain the sharper inclusion $\sigma(T_k) \subseteq [0, \|T_k\|] \subseteq [0, K_G]$.
\end{proof}

\begin{remark}
This result provides a universal constant bound for a class of integral operators whose bilinear forms are controlled by $K_G$. It is particularly useful in situations where explicit kernel estimates are difficult to obtain, while the bilinear form can be shown to satisfy Grothendieck's condition.
\end{remark}

\subsection{Application to Compact Operators on Sobolev Spaces}

Let $\Omega \subset \mathbb{R}^n$ be a bounded domain with $C^\infty$ boundary, and let $H^s(\Omega)$ denote the Sobolev space of order $s \geq 0$. We examine integral operators arising from Green's functions of elliptic boundary-value problems.

\begin{proposition}[Green's operator as a Grothendieck integral]\label{prop:green-operator}
Consider the Dirichlet problem for the Laplace operator:
\[
\begin{cases}
-\Delta u = f & \text{in } \Omega, \\
u = 0 & \text{on } \partial \Omega.
\end{cases}
\]
Let $G: L^2(\Omega) \to H_0^1(\Omega)$ be the solution operator, i.e., $u = Gf$. By elliptic regularity theory, $G$ extends to a compact self-adjoint operator on $L^2(\Omega)$ (still denoted by $G$). Define the bilinear form $\phi_G: L^2(\Omega) \times L^2(\Omega) \to \mathbb{R}$ by
\[
\phi_G(f, g) = \int_\Omega (Gf)(x) g(x) \, dx.
\]
Then $\phi_G$ is continuous and admits the representation
\[
\phi_G(f, g) = \sum_{j=1}^\infty \lambda_j \langle f, \psi_j \rangle \langle g, \psi_j \rangle,
\]
where $\{\psi_j\}$ are the eigenfunctions of $G$ with eigenvalues $\lambda_j > 0$, forming an orthonormal basis of $L^2(\Omega)$. Consequently, $\phi_G$ is a Grothendieck functional integral with constant $1$; in particular, $\|\phi_G\|_G \leq 1$. Moreover, $\|\phi_G\| \leq C(\Omega)$, where $C(\Omega) = \|G\|_{L^2 \to L^2}$.
\end{proposition}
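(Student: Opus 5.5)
The plan is to reduce everything to the spectral theory of a compact, positive, self-adjoint operator on $L^2(\Omega)$, and then to read off the Grothendieck-integral statements from Proposition~\ref{prop:Correspondence} and Lemma~\ref{thm:4.2}.

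\textbf{Step 1: properties of $G$.} For $f\in L^2(\Omega)$, Lax--Milgram applied to the coercive form $a(u,v)=\int_\Omega \nabla u\cdot\nabla v\,dx$ on $H_0^1(\Omega)$ gives a unique weak solution $u=Gf$. The Poincaré inequality yields $\|Gf\|_{H^1_0}\le C\|f\|_{L^2}$, so $G:L^2(\Omega)\to H_0^1(\Omega)$ is bounded. Composing with the Rellich--Kondrachov compact embedding $H_0^1(\Omega)\hookrightarrow L^2(\Omega)$ shows that $G$ is compact on $L^2(\Omega)$. Boundedness of $\Omega$ suffices here; the smoothness of $\partial\Omega$ is not needed.

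Testing the weak formulations against each other gives
\[
\int_\Omega (Gf)\,g\,dx=\int_\Omega \nabla Gf\cdot\nabla Gg\,dx .
\]
This expression is symmetric in $f,g$ and nonnegative when $f=g$. If $Gf=0$ then $f=-\Delta Gf=0$ in the distributional sense, so $G$ is injective. Hence $G$ is self-adjoint, positive and injective.

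\textbf{Step 2: eigen-expansion.} The spectral theorem for compact self-adjoint operators gives an orthonormal system $\{\psi_j\}$ with eigenvalues $\lambda_j\to 0$. Positivity gives $\lambda_j\ge 0$. Since $\ker G=\{0\}$, we get $\lambda_j>0$ for every $j$, and the $\psi_j$ span a dense subspace, so they form an orthonormal basis. Expanding $Gf=\sum_j\lambda_j\langle f,\psi_j\rangle\psi_j$ in $L^2$ and pairing with $g$ gives the stated series for $\phi_G(f,g)$, which converges absolutely by Cauchy--Schwarz.

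\textbf{Step 3: the norm.} By Lemma~\ref{thm:4.2} (with $T_\mu=G$ under the Riesz identification of Theorem~\ref{thm:6.1}),
\[
\|\phi_G\|=\|G\|_{L^2\to L^2}=\lambda_1=\sup_j\lambda_j,
\]
where the last equalities use the diagonal form of $G$. This is the bound $\|\phi_G\|\le C(\Omega)$ with $C(\Omega)=\|G\|$, and in fact $C(\Omega)$ equals the reciprocal of the first Dirichlet eigenvalue of $-\Delta$ on $\Omega$.

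\textbf{Step 4: the claim $\|\phi_G\|_G\le 1$ (main obstacle).} This is where I expect the difficulty. By Proposition~\ref{prop:Correspondence}, $\|\phi_G\|_G=\|\phi_G\|=1/\lambda_1(-\Delta)$, and this quantity is not bounded by $1$, or even by $K_G$, for large domains: under dilation $\Omega\mapsto t\Omega$ it scales like $t^2$. So the claim cannot hold unconditionally, and I would need one of the following two readings.

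\emph{Normalization.} Mimicking Example~6.5, replace $\phi_G$ by $\phi_G/\|\phi_G\|$. This form has Grothendieck norm exactly $1\le K_G$, and Step 2 then reads
\[
\lambda_j/\lambda_1\le 1 \quad\text{for all } j .
\]

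\emph{Size condition on $\Omega$.} Alternatively, impose $\lambda_1(-\Delta,\Omega)\ge 1$, or the weaker $\lambda_1(-\Delta,\Omega)\ge 1/K_G$ if only membership in $\mathcal{G}(L^2,L^2)$ is wanted. This can be guaranteed by a Poincaré-constant bound, for instance $\lambda_1\ge \pi^2/\operatorname{diam}(\Omega)^2$ via inclusion in a cube and domain monotonicity.

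I would state the result under one of these readings. The remaining steps are standard.
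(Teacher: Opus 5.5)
Your Steps 1--3 are correct. The obstacle you identify in Step 4 is genuine: it is an error in the statement, not a gap in your argument.

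The paper's proof derives $\|\phi_G\|_G\le 1$ from the remark that the eigen-expansion makes $\phi_G$ ``already an inner product in $\ell^2$''. What the expansion actually gives is a factorization $\phi_G(f,g)=\langle Af,Ag\rangle_{\ell^2}$ with $Af=(\sqrt{\lambda_j}\,\langle f,\psi_j\rangle)_j$. This is a representation as in Theorem~\ref{thm:Hilbert} whose factorization constant is $1$ rather than $K_G$, since $\|A\|^2=\|\phi_G\|$. That controls the cost of the factorization, not the size of $\phi_G$.

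Since $\|\cdot\|_G$ is by definition the functional norm, $\|\phi_G\|_G=\|\phi_G\|=\|G\|=1/\lambda_1(-\Delta,\Omega)$, exactly as you say. For $\Omega=(0,L)$ one has $\|G\|=L^2/\pi^2$. This exceeds $1$ for $L>\pi$ and exceeds $K_G$ for $L>\pi\sqrt{K_G}$, so $\phi_G$ need not even belong to $\mathcal{G}(L^2,L^2)$.

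The only correct norm assertion in the proposition is the last one, $\|\phi_G\|\le C(\Omega)$, which is in fact an equality. Either of your repairs yields a true statement: normalizing, or assuming $\lambda_1(-\Delta,\Omega)\ge 1$ (resp.\ $\ge 1/K_G$).

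One small point: Lemma~\ref{thm:4.2} and Proposition~\ref{prop:Correspondence} are stated for $\mu\in\mathcal{G}$. The identity $\|\phi_G\|=\|G\|$ you need does not depend on that hypothesis (take $g=Gf/\|Gf\|$), so your counterexample is not circular.

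For the true parts, your route also differs from the paper's and is better. The paper appeals to elliptic theory and asserts that the Green function lies in $L^2(\Omega\times\Omega)$. This fails for $n\ge 4$: the singularity $|x-y|^{2-n}$ is not square integrable there, and $G$ is not Hilbert--Schmidt. Only compactness is actually needed. Your Lax--Milgram, Poincar\'e and Rellich argument gives compactness in every dimension without using boundary regularity. Your injectivity argument also supplies what the paper leaves implicit: every $\lambda_j$ is strictly positive and the $\psi_j$ form a complete orthonormal system.
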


\begin{proof}
Standard elliptic theory (see, e.g., [\cite{Evans2010}, Section 6.5] implies that $G$ is a compact self-adjoint operator from $L^2(\Omega)$ to $L^2(\Omega)$ and that its kernel (the Green function) belongs to $L^2(\Omega \times \Omega)$. The Hilbert-Schmidt theorem yields the eigenexpansion above, which shows that $\phi_G$ is already an inner product in $\ell^2$; hence it is a Grothendieck integral with constant $1$. The bound $\|\phi_G\| \leq \|G\|$ is immediate from the definition of the operator norm. 
\end{proof}

\begin{example}[Eigenvalue estimates]\label{ex:eigenvalue-estimates}
For the Green's operator $G$ above, the eigenvalues $\lambda_j$ satisfy two complementary estimates:
\begin{enumerate}
\item Uniform bound: $\lambda_j \leq \|G\| = C(\Omega)$ for all $j$.
\item Decay bound (Weyl law): There exists a constant $C'(\Omega)$ such that $\lambda_j \leq C'(\Omega) j^{-2/n}$ for all $j$.
\end{enumerate}
These estimates illustrate how the abstract Grothendieck framework (which supplies the uniform bound) complements classical spectral theory.
\end{example}

\subsection{Multiple Integration and Iterated Kernels}

The theory of multiple Grothendieck integrals developed in Section 5 applies naturally to iterated integral operators.

\begin{theorem}[Iterated integral operators]
Let $k_1 \in L^2(X \times Y)$ and $k_2 \in L^2(Y \times Z)$ be kernels that induce operators $T_1: L^2(X) \to L^2(Y)$ and $T_2: L^2(Y) \to L^2(Z)$. Suppose the associated bilinear forms $\phi_1$ and $\phi_2$ satisfy $\|\phi_i\| \leq K_G$ for $i = 1,2$. Then the composed operator $T = T_2 \circ T_1$ corresponds to a double Grothendieck integral $\mu \in \mathcal{G}^{(2)}(L^2(X), L^2(Y), L^2(Z))$ with bound $\|\mu\|_{G^{(2)}} \leq K_G^2$.

Moreover, the kernel $k \in L^2(X \times Z)$ of $T$ is given by the convolution
\[
k(x, z) = \int_Y k_1(x, y) k_2(y, z) \, d\nu(y) \quad (\text{for a.e. } (x,z) \in X \times Z),
\]
and the corresponding triple integral satisfies the following iterated integration identity for all $f \in L^2(X)$, $h \in L^2(Z)$:
\[
\begin{aligned}
\iiint_{X \times Y \times Z} & k(x,z) f(x) h(z) \, d\mu(x) d\nu(y) d\rho(z) \\
&= \int_Z \left( \int_Y \left( \int_X k_1(x, y) f(x) \, d\mu(x) \right) k_2(y, z) \, d\nu(y) \right) h(z) \, d\rho(z).
\end{aligned}
\tag{7.6}
\]
\end{theorem}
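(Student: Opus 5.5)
The plan is to reduce everything to the bilinear theory of Sections~3--4 and Theorem~\ref{thm:6.1}, plus a Tonelli/Cauchy--Schwarz argument for the kernel identities. By Theorem~\ref{thm:6.1} (or the norm bound theorem above), $\|T_1\| = \|\phi_1\| \le K_G$ and $\|T_2\| = \|\phi_2\| \le K_G$. Hence $T = T_2\circ T_1 : L^2(X)\to L^2(Z)$ satisfies $\|T\|\le \|T_2\|\|T_1\|\le K_G^2$.

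\emph{Interpretation of the ``double'' integral.} I read $\mu$ as the functional on $L^2(X)\otimes L^2(Z)$, obtained by ``integrating out'' the middle variable $y$:
\[
\mu(f\otimes h) = \int_Z (Tf)(z)\,h(z)\,d\rho(z).
\]
With this reading, Lemma~\ref{thm:4.2} and Proposition~\ref{prop:Correspondence} give $\|\mu\| = \|T\| \le K_G^2$, which is the bound $\|\mu\|_{G^{(2)}}\le K_G^2$ in the sense of the multiple-integral definition with $n-1 = 2$ factors of $K_G$. The two factors come from $T_1$ (the first Fubini step, Theorem~\ref{thm:Fubini}) and from $T_2$ (the second). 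This reading is a choice on my part, because the statement does not spell out the trilinear form. A genuinely trilinear form in $(f,g,h)$ with $g\in L^2(Y)$ would involve a product of three $L^2$ functions on $Y$, which is not controlled in general. So I would not try to build one, and would instead note that the $Y$-variable is consumed by the composition.

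\emph{Kernel of $T$.} For a.e.\ $(x,z)$, Cauchy--Schwarz in $y$ gives
\[
|k(x,z)|^2 \le \|k_1(x,\cdot)\|_{L^2(Y)}^2\,\|k_2(\cdot,z)\|_{L^2(Y)}^2 .
\]
Integrating over $X\times Z$ and using Tonelli yields $\|k\|_{L^2(X\times Z)}\le \|k_1\|_{L^2}\|k_2\|_{L^2}$. In particular the integral defining $k$ converges absolutely a.e., and $k\in L^2(X\times Z)$. To identify $k$ as the kernel of $T$, I would write
\[
(T_2T_1f)(z) = \int_Y k_2(y,z)\int_X k_1(x,y)f(x)\,d\mu(x)\,d\nu(y)
\]
and swap the $x$ and $y$ integrals. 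The swap is justified because
\[
\int_Y\!\int_X |k_2(y,z)||k_1(x,y)||f(x)|\,d\mu\,d\nu \le \|f\|_2\,\|k_2(\cdot,z)\|_2\,\|k_1\|_{L^2(X\times Y)},
\]
which is finite for a.e.\ $z$ by repeated Cauchy--Schwarz and Tonelli.

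\emph{The identity (7.6).} Pairing with $h$ and performing the analogous absolute-integrability check, now over $X\times Y\times Z$, reduces to the bound
\[
\iiint |k_1(x,y)|\,|k_2(y,z)|\,|f(x)|\,|h(z)| \le \|k_1\|_2\|k_2\|_2\|f\|_2\|h\|_2 .
\]
Classical Fubini then allows all orders of integration, and this gives the right-hand side of (7.6) equal to $\iint k(x,z)f(x)h(z)\,d\mu\,d\rho$. The left-hand side of (7.6) also carries $d\nu(y)$ while its integrand does not depend on $y$. I would therefore read it either as the double integral (equivalently, with $\nu$ normalized to a probability measure) or, more naturally, with $k$ replaced by $k_1(x,y)k_2(y,z)$, in which case it is literally the triple integral justified above.

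I expect the main obstacle to be this bookkeeping: fixing a consistent meaning for $\mu$ as a ``double'' integral and for the left side of (7.6), and verifying the absolute integrability needed for Fubini. The norm bound itself is immediate from submultiplicativity.
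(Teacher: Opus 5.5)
Your argument is correct, and it takes a genuinely different route from the paper. The paper's entire proof is an appeal to the multiple Fubini theorem (Theorem~\ref{thm:Multiple}) on $L^2(X)\otimes_\pi L^2(Y)\otimes_\pi L^2(Z)$, with $K_G^2$ read off from the induction there. You instead contract the $Y$-variable first and work with the bilinear functional on $L^2(X)\otimes_\pi L^2(Z)$ induced by $T_2T_1$. You get the bound from $\|T_2T_1\|\le\|T_2\|\,\|T_1\|$, and you prove the kernel formula and (7.6) by Cauchy--Schwarz/Tonelli estimates plus classical Fubini.

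Your route supplies all the analytic content the paper's citation never touches:
a.e.\ absolute convergence of the composition integral,
the bound $\|k\|_{L^2}\le\|k_1\|_{L^2}\|k_2\|_{L^2}$,
identification of $k$ as the kernel of $T$,
and the $L^1(X\times Y\times Z)$ bound that licenses the interchange.

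Your remark on the trilinear form also shows the paper's route cannot work as written. Theorem~\ref{thm:Multiple} \emph{assumes} a trilinear form already bounded by $K_G^2$, but the natural candidate $\iiint k_1(x,y)k_2(y,z)f(x)g(y)h(z)$ is unbounded in general. Take $X$, $Z$ one-point spaces of unit mass, $Y=[0,1]$, and $k_1=k_2=c\,y^{-1/3}$. Then $\|\phi_i\|=c\sqrt3$, yet the form equals $c^2fh\int_0^1 y^{-2/3}g\,dy$, which is unbounded on $L^2$. So invoking that theorem is circular.

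The price of your route is that $\mu$ lives on a \emph{twofold} product, where the paper's definition allows only the constant $K_G$. Since $\|T_2T_1\|$ can equal $K_G^2$ (e.g.\ $k_1=k_2\equiv K_G$ on $[0,1]^2$), $\mu$ need not lie in $\mathcal{G}(L^2(X),L^2(Z))$. Calling $\|\mu\|\le K_G^2$ a $G^{(2)}$ bound ``with $n-1=2$'' is therefore a relabeling. State the conclusion plainly as $\mu\in(L^2(X)\otimes_\pi L^2(Z))^*$ with $\|\mu\|=\|T_2T_1\|\le K_G^2$.

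Your treatment of the stray $d\nu(y)$ in (7.6) is right: as written, the left side is $\nu(Y)$ times the double integral when $\nu(Y)<\infty$.
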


\begin{proof} This is a direct application of the multiple Fubini theorem (Theorem~\ref{thm:Multiple}) to the triple projective tensor product $L^2(X)\otimes_\pi L^2(Y)\otimes_\pi L^2(Z)$. The bound $K_G^2$ follows from the inductive construction in the proof of Theorem~\ref{thm:Multiple}.
\end{proof}

\subsection{Summary and Discussion}
In this section we have demonstrated how the Grothendieck functional integral framework yields:
\begin{enumerate}
    \item[\textup{1.}] Concrete kernel representations for operators whose bilinear forms obey the Grothendieck bound;
    \item[\textup{2.}] Uniform norm estimates controlled by the universal constant $K_G$;
    \item[\textup{3.}] Applications to elliptic Green’s operators, providing eigenvalue bounds that complement classical spectral theory;
    \item[\textup{4.}] A natural setting for iterated integrals through the multiple Fubini theorem.
\end{enumerate}
These applications illustrate the power of the abstract theory developed in Sections~\ref{sec:3}--~\ref{sec:5}. A key advantage of the Grothendieck framework is the universality of the constant $K_G$, which supplies dimension‑independent bounds that are particularly valuable in asymptotic and spectral analysis.

\section{Conclusion and outlook}\label{sec:8}

This paper has systematically introduced the concept of Grothendieck functional integrals, which take Grothendieck's constant as the controlling core and define a class of linear functionals with Hilbert space representations on the projective tensor products of arbitrary Banach spaces. We have proved an abstract Fubini theorem for such integrals, showing that the order of integration may be interchanged, and provided operator-theoretic realizations. Furthermore, we extended the theory to multiple tensor products and established multiple Fubini theorems. Finally, we presented integral representations in concrete function spaces and discussed their relation to classical integrals.

The advantages of Grothendieck functional integrals include:

\begin{enumerate}
    \item[\textup{1.}] Universality: The definition does not depend on specific measure or space structures, applicable to general Banach spaces;

    \item[\textup{2.}] Structure: Automatic Hilbert space representation facilitates geometric analysis and estimation;

    \item[\textup{3.}] Constant control: The use of Grothendieck's constant as a scale provides a uniform norm bound;

    \item[\textup{4.}] Computational friendliness: The Fubini theorem allows flexible interchange of integration order, simplifying calculations.
\end{enumerate}

Directions for future research
\begin{enumerate}
    \item[\textup{1.}] Change of variables formula: We first need to define the projective tensor product on Banach manifolds. Then we combine it with the Hilbert-space representation of tangent spaces (see \cite{Lang1999} for tensor analysis on Banach manifolds).
    \item[\textup{2.}] Connection with distribution theory: Develop Grothendieck integrals as a new type of integral for generalized functions, with applications to partial differential equations. This involves extending the integral definition to spaces of distributions and establishing corresponding convergence results.
    \item[\textup{3.}] Stochastic analysis: Investigate applications of Grothendieck integrals in Wiener spaces and Gaussian spaces, and explore connections with Malliavin calculus. This includes defining stochastic Grothendieck integrals and establishing their properties.
    \item[\textup{4.}] Quantum field theory: Consider using Grothendieck integrals to formalize path integrals in quantum field theory, particularly for polynomial-type interacting fields. This involves applying the multiple Fubini theorem to simplify high-dimensional integral calculations (see \cite{GJ1987} for multilinear expansions of functional integrals).
    \item[\textup{5.}] Optimal constant problems: Investigate the minimal constants required for the definition to hold in different classes of spaces, and explore precise relations with Grothendieck’s constant. This includes determining the optimal constants for specific Banach spaces and tensor product norms.
    \item[\textup{6.}] Numerical methods: Based on Hilbert space representations, design numerical approximation algorithms for Grothendieck integrals. This involves developing efficient methods for computing inner products in Hilbert spaces and applying them to approximate Grothendieck integrals.
\end{enumerate}

In summary, Grothendieck functional integrals provide a novel and powerful framework for bilinear and multilinear analysis, with promising applications in various mathematical and physical domains.

\end{document}